\numberwithin{equation}{section}
\theoremstyle{plain}
\newtheorem{thm}{Theorem}[section]
 \newtheorem{cor}[thm]{Corollary}
 \newtheorem{lemma}[thm]{Lemma}
\newtheorem{prop}[thm]{Proposition}
\newtheorem*{thma}{Theorem A}
\newtheorem*{thmb}{Theorem B}
\theoremstyle{definition}
\newcommand{\dlabel}[1]{\ifmmode \text{\ttfamily \upshape [#1] } \else
{\ttfamily \upshape [#1] }\fi \label{#1}}
\newcommand{\Ho}{\operatorname{H} }
\newcommand{\M}{\operatorname{M} }
\newcommand{\Z}{\operatorname{Z} }
\newcommand{\id}{\operatorname{Id} }
\newcommand{\gen}[1]{\left < #1 \right >}
\newcommand{\res}{\operatorname{res} }
\newcommand{\Hom}{\operatorname{Hom} }
\newcommand{\Ker}{\operatorname{Ker} }
\newcommand{\im}{\operatorname{Im} }
\newcommand{\tra}{\operatorname{tra} }
\begin{document}

\title{The Schur multiplier of central product of groups}

\author{Sumana Hatui}
\address{School of Mathematics, Harish-Chandra Research Institute, Chhatnag Road, Jhunsi, Allahabad 211019, India \& Homi Bhabha National Institute, Training School Complex, Anushakti Nagar, Mumbai 400085, India}
 \email{sumanahatui@hri.res.in}

\author{L. R. Vermani}
\address{961, Sector 7, Urban Estate, Kuruskhetra, India}
 \email{lrver@yahoo.com}

 \author{Manoj K. Yadav}
\address{School of Mathematics, Harish-Chandra Research Institute, Chhatnag Road, Jhunsi, Allahabad 211019, INDIA \& Homi Bhabha National Institute, Training School Complex, Anushakti Nagar, Mumbai 400085, India}
 \email{myadav@hri.res.in}

\thanks{The second author would like to thank Harish-Chandra Research Institute, Allahabad for the excellent hospitality provided to him couple of times during which a major part of this work was done.}
\subjclass[2010]{20J06}
\keywords{Second cohomology group, Schur multiplier, central product}

\begin{abstract}
Let $G$ be a central product of two groups $H$ and $K$. We study second cohomology group of $G$, having coefficients in a  divisible abelian group $D$ with trivial $G$-action,  in terms of the second cohomology groups of certain quotients of $H$ and $K$. In particular, for $D =  \mathbb{C}^{*}$, some of our results provide a refinement of results from [Some groups with non-trivial multiplicators, Math. Z. {\bf 120 } (1971), 307-308] and [On the Schur multiplicator of a central quotient of a direct product of groups,  J. Pure Appl. Algebra {\bf 3} (1973), 73-82].
\end{abstract}

\maketitle 

\section{Introduction}

The Schur multiplier $\M(G)$ of a given group $G$, introduced by Schur in 1904 \cite{IS04},  is the second cohomology group $\Ho^2(G, \mathbb{C}^{*})$  of $G$ with coefficients in $\mathbb{C}^{*}$.
Let  a finite group $G$ be the  direct product of two groups $H$ and $K$. Then the formulation of the Schur multiplier of $G$  in terms of the Schur multipliers of $H$ and $K$ was given by Schur himself \cite{IS07}. Such a formulation, when $G$ is a semidirect product of  groups $H$ and $K$, was given by Tahara \cite{KT}.
 
We say that $G$ is an internal central product of   normal subgroups $H$ and $K$ amalgamating $A$ if $G=HK$ with $A=H \cap K$  and $[H,K]=1$.
Let $H$, $K$ be two groups with  isomorphic subgroups $A \leq \Z(H)$, $B \leq \Z(K)$ under an  isomorphism $\phi: A \rightarrow B$. Consider the normal subgroup $U = \{( a, \phi(a)^{-1}) \mid a \in A\}$. 
Then the group $G :=(H \times K)/U$ is called the external central product of $H$ and $K$ amalgamating $A$ and $B$ via $\phi$. The external central product $G$ can be viewed as an internal central product of  the images of $H \times 1$ and $1 \times K$ in $G$. For this reason, we do not differentiate between external and internal central products, and consider only internal ones.

Let $G$ be a finite group which is a central product of  subgroups $H$ and $K$ amalgamating $A$.  Wiegold \cite{JW} proved that $\M(G)$ contains a subgroup isomorphic to $H/A \otimes K/A$, the abelian tensor product of $H/A$ and $K/A$. A generalization of this result for an arbitrary central quotient of direct product of two arbitrary groups was considered  in \cite{EHS}.

Recall that $\Ho^2(G, A)$ denotes the second cohomology group of a group $G$ with coefficients in a $G$-module $A$. We are particularly interested in the case  when $A$ is a trivial $G$-module and is divisible. We reserve  $D$ for such a module. Throughout the paper, unless said otherwise explicitly, $G$ is always a  central product of its normal subgroups $H$ and $K$ with $A=H \cap K$. In this paper we study $\Ho^2(G, D)$,
  in terms of the second cohomology groups of certain quotients of $H$ and $K$ with coefficients in $D$.
  Set  $Z = H' \cap K'$, where $X'$ denotes the commutator subgroup of a group $X$. The following result provides a reduction to the case when $Z = 1$.

\begin{thma}\label{thma}
Let $B$ be a subgroup of $G$ such that $B \le Z$. Then $$\Ho^2(G, D) \cong \Ho^2(G/B, D)/N,$$
where $N \cong \Hom (B, D)$.
\end{thma}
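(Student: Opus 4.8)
The plan is to realize $G/B$ as a central quotient, run the five-term inflation–restriction sequence to pin down the kernel of inflation, and then use the divisibility of $D$ together with the central product hypothesis to force inflation to be onto.

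First I would record that $B$ is central and lies in $G'$. Since $H,K\normal G$ with $[H,K]=1$, the subgroups $H',K'$ are normal in $G$, and any $z\in Z=H'\cap K'$ commutes with $H$ (as $z\in K'\le K$) and with $K$ (as $z\in H'\le H$), hence with all of $G=HK$; thus $Z\le\Z(G)\cap G'$, so in particular $B\le\Z(G)\cap G'$ and $G/B$ is a central quotient. This exhibits $1\to B\to G\to G/B\to1$ as a central extension, to which I attach the five-term exact sequence in cohomology with coefficients in the trivial divisible module $D$,
\[
0\to\Ho^1(G/B,D)\to\Ho^1(G,D)\xrightarrow{\ \res\ }\Hom(B,D)\xrightarrow{\ \tra\ }\Ho^2(G/B,D)\xrightarrow{\ \operatorname{inf}\ }\Ho^2(G,D).
\]
The cheap half of the argument is to identify $N=\ker(\operatorname{inf})$. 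Because $B\le G'$, every homomorphism $G\to D$ kills $B$, so $\res\colon\Ho^1(G,D)=\Hom(G,D)\to\Hom(B,D)$ is zero; by exactness $\tra$ is injective and hence
\[
N:=\ker(\operatorname{inf})=\im(\tra)\cong\Hom(B,D).
\]
Consequently the asserted isomorphism $\Ho^2(G,D)\cong\Ho^2(G/B,D)/N$ is \emph{equivalent} to the surjectivity of $\operatorname{inf}$, and this is where the real work lies.

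To attack surjectivity I would exploit that $D$, being divisible, is an injective $\mathbb{Z}$-module: the universal coefficient sequence degenerates ($\operatorname{Ext}^1_{\mathbb{Z}}(-,D)=0$), giving natural identifications $\Ho^2(G,D)\cong\Hom(\Ho_2(G,\mathbb{Z}),D)$ and $\Ho^2(G/B,D)\cong\Hom(\Ho_2(G/B,\mathbb{Z}),D)$, under which $\operatorname{inf}$ becomes $\Hom(\pi_*,D)$ for $\pi_*\colon\Ho_2(G,\mathbb{Z})\to\Ho_2(G/B,\mathbb{Z})$ the map induced by $\pi\colon G\to G/B$. Since $\Hom(-,D)$ is exact, surjectivity of $\operatorname{inf}$ reduces to $\Hom(\ker\pi_*,D)=0$; and because one wants this uniformly, the clean sufficient condition to aim for is the injectivity of $\pi_*$ on second integral homology. (This is also the step that pins down where divisibility is genuinely used.)

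The main obstacle is exactly this injectivity, and it is the only point that must use $B\le H'\cap K'$ rather than the weaker $B\le G'$: for an arbitrary stem extension $\pi_*$ need not be injective (already $D_8\to C_2\times C_2$ fails), so the central product structure is indispensable. Here I would invoke the low-degree analysis of the Lyndon–Hochschild–Serre spectral sequence of $1\to B\to G\to G/B\to1$, which shows that $\ker\pi_*$ is assembled from subquotients of $(G/B)^{\mathrm{ab}}\otimes B$ and of $\textstyle\bigwedge^2 B$ (the latter being the image of $\Ho_2(B,\mathbb{Z})\to\Ho_2(G,\mathbb{Z})$). The plan is to annihilate these contributions using that each element of $B$ is simultaneously a product of commutators from $H$ and from $K$, with $H$ and $K$ centralizing one another: this lets one realize the relevant commutator and tensor classes as coming from inside the two commuting factors, where they collapse, forcing $\ker\pi_*=0$. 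Carrying out this vanishing cleanly—equivalently, producing for each such class an explicit preimage supplied by the commuting factors $H$ and $K$—is the technical heart of the proof; granting it, $\operatorname{inf}$ is surjective and the theorem follows from the five-term computation of $N$ above.
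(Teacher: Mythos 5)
Your identification of the kernel is correct and coincides with the paper's: since $B\le Z\le G'$, the restriction $\Hom(G,D)\to\Hom(B,D)$ is zero, so the transgression in the five-term sequence is injective and $\Ker(\inf)=\im(\tra)\cong\Hom(B,D)$; the theorem is thereby equivalent to surjectivity of $\inf\colon\Ho^2(G/B,D)\to\Ho^2(G,D)$. Your universal-coefficients reduction of that surjectivity (via injectivity of $D$) to the vanishing of $\Hom(\ker\pi_*,D)$, where $\pi_*\colon\Ho_2(G,\mathbb{Z})\to\Ho_2(G/B,\mathbb{Z})$, is also sound. The problem is that you never prove the statement you reduce to. The whole content of the theorem is the surjectivity of inflation, and your treatment of it is a declared plan: you assert that the pieces of $\ker\pi_*$ coming from $(G/B)^{\mathrm{ab}}\otimes B$ and $\bigwedge^2 B$ can be ``annihilated'' using the central product structure, and then conclude ``granting it.'' That granted step is exactly the technical heart; it is the only place where the hypothesis $B\le H'\cap K'$ (as opposed to $B\le G'\cap\Z(G)$, which your own $D_8$ example shows is insufficient) can enter, and it is missing.

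For comparison, the paper does precisely this work in its Theorem 3.1: it takes a free presentation $F/R$ of $G$, with $H$, $K$, $B$ presented by $S_1/R$, $S_2/R$, $S/R$, invokes Vermani's criterion (Lemma 2.1) that inflation is surjective if and only if $[F,R]=R\cap[F,S]$, and verifies this by commutator calculus: $[F,S]=[S_1,S][S_2,S]$, then $S\subseteq S_2'R$ together with the three subgroup lemma gives $[S_1,S]\subseteq[F,R]$, and symmetrically $[S_2,S]\subseteq[F,R]$. Note that your reduction is in fact equivalent to this criterion, not a shortcut around it: by Hopf's formula, $\ker\pi_*=(R\cap[F,S])/[F,R]$, so $\ker\pi_*=0$ is literally the identity $[F,R]=R\cap[F,S]$ (and since $B$ is central, $R\cap[F,S]=[F,S]$). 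So your proposal stops exactly where the paper's real argument begins. The gap is fillable within your framework --- for instance, by Ganea's exact sequence $B\otimes G^{\mathrm{ab}}\to\Ho_2(G)\xrightarrow{\pi_*}\Ho_2(G/B)\to B\to\cdots$, it suffices to kill the Ganea map, and for $b\in B$, $h\in H$ the class of $b\otimes\bar h$ is the image of a K\"unneth cross-product class under the multiplication homomorphism $K\times H\to G$, which vanishes because $b\in K'$ dies in $\Ho_1(K)$ (symmetrically use $b\in H'$ for $k\in K$) --- but some such argument must actually be supplied before this counts as a proof.
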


This result is very useful for computational purposes when $G$ is finite and $M(G/B)$ is known. Just to elaborate, we immediately get the following result for finite extraspecial $p$-groups proved in \cite[Corollary 3.2]{BE}.

\begin{cor}
Let $G$ be an extra-special $p$-group of order $p^{2n+1}$, $n \geq 2$. Then  $M(G)$ is an elementary abelian $p$-group of order $p^{2n^2-n-1}.$
\end{cor}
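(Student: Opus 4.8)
The plan is to realise $G$ as a central product of two \emph{nonabelian} subgroups whose derived subgroups both coincide with $\Z(G)$, and then to apply Theorem~A with $B=\Z(G)$ so as to reduce the computation to the Schur multiplier of the elementary abelian quotient $G/\Z(G)$. The exponent $2n^2-n-1$ should then drop out of a rank count.

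First I would record the structure of $G$. Since $G$ is extraspecial of order $p^{2n+1}$, its centre $\Z(G)=G'=\Phi(G)$ is cyclic of order $p$, say $\Z(G)=\langle z\rangle$, and $G/\Z(G)$ is elementary abelian of order $p^{2n}$. It is classical that $G$ is an iterated central product of $n$ nonabelian groups of order $p^3$ amalgamating their common centre. Grouping one such factor as $H$ (of order $p^3$) and the remaining $n-1$ factors as $K$ (of order $p^{2n-1}$), I obtain $G=HK$ with $A=H\cap K=\langle z\rangle$ and $[H,K]=1$; that is, $G$ is the central product of $H$ and $K$ amalgamating $\langle z\rangle$. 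Here the hypothesis $n\ge 2$ is essential: it forces $n-1\ge 1$, so that $K$ is nonabelian and hence $H'=\Z(H)=\langle z\rangle$ and $K'=\Z(K)=\langle z\rangle$. Consequently $Z=H'\cap K'=\langle z\rangle=\Z(G)$.

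Next I would take $B=Z=\Z(G)$, which is admissible since $B\le Z$, and invoke Theorem~A to get $\Ho^2(G,D)\cong \Ho^2(G/B,D)/N$ with $N\cong \Hom(B,D)$. Specialising to $D=\mathbb{C}^{*}$ gives $\M(G)\cong \M(G/\Z(G))/N$ with $N\cong \Hom(\langle z\rangle,\mathbb{C}^{*})\cong \mathbb{Z}/p$. Since $G/\Z(G)\cong(\mathbb{Z}/p)^{2n}$, the Schur multiplier of an elementary abelian group yields $\M(G/\Z(G))\cong(\mathbb{Z}/p)^{\binom{2n}{2}}$, which is elementary abelian of order $p^{2n^2-n}$ because $\binom{2n}{2}=2n^2-n$. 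As a quotient of an elementary abelian $p$-group by a subgroup of order $p$, the group $\M(G)$ is again elementary abelian, of order $p^{2n^2-n}/p=p^{2n^2-n-1}$, as claimed.

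The computation is essentially routine once the decomposition is in place; the only points requiring care are the central product decomposition of an extraspecial group, and thereby the equality $Z=\Z(G)$ for which $n\ge 2$ is used, together with the value $\binom{2n}{2}$ of the rank of $\M((\mathbb{Z}/p)^{2n})$. The one genuinely delicate step — recognising $N$ as a subgroup of order exactly $p$ sitting inside the elementary abelian group $\M(G/\Z(G))$, so that the quotient remains elementary abelian — is handed to us directly by Theorem~A.
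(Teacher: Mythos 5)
Your proposal is correct and follows exactly the route the paper intends: decompose the extraspecial group $G$ as a central product of nonabelian factors $H$ and $K$ (using $n\ge 2$) so that $Z=H'\cap K'=\Z(G)$, apply Theorem~A with $B=Z$, and compute $\M(G/\Z(G))\cong(\mathbb{Z}/p)^{\binom{2n}{2}}$ for the elementary abelian quotient. The paper states the corollary follows ``immediately'' from Theorem~A without writing out these details, and your write-up supplies precisely the intended argument.
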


By the tensor product $G_1 \otimes G_2$ of two groups $G_1$ and $G_2$, we always mean the abelian tensor product, i.e., $G_1/G_1' \otimes G_2/G_2'$.  Our next result is the following:

\begin{thmb}\label{thmb}
Let $L \cong \Hom \big((A\cap H')/Z, D \big)$,  $M \cong \Hom \big((A\cap K')/Z, D \big)$ and $N \cong \Hom(Z, D)$.
 Then the following statements hold true:

(i)  $\big(\Ho^2(H/A, D)/L \oplus \Ho^2(K/A,D)/M\big)/N \oplus \Hom(H/A \otimes K/A, D) $ embeds in $\Ho^2(G, D)$;

(ii) $\Ho^2(G, D)$ embeds in  $\big(\Ho^2(H/Z, D) \oplus \Ho^2(K/Z,D)\big)/N \oplus \Hom(H \otimes K, D).$
\end{thmb}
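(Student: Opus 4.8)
The plan is to derive both embeddings from two structural observations about the central product, combined with the five-term inflation--restriction exact sequence for central extensions and the Schur (Künneth) decomposition of $\Ho^2$ of a direct product. The first observation is that $G/A \cong H/A \times K/A$: indeed $G/A = HK/A$, the images of $H$ and $K$ commute because $[H,K]=1$, and their intersection in $G/A$ is trivial because $H\cap K = A$. The second is that $Z=H'\cap K'$ satisfies $Z\le A\le\Z(G)$, so Theorem A applies with $B=Z$ and gives $\Ho^2(G,D)\cong\Ho^2(G/Z,D)/N$ with $N\cong\Hom(Z,D)$; moreover the quotient $\overline G=G/Z$ satisfies $\overline H'\cap\overline K'=1$, where $\overline H=H/Z$ and $\overline K=K/Z$. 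Throughout, divisibility of $D$ enters through the exactness of $\Hom(-,D)$.

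For (i), I would apply the five-term sequence to the central extension $1\to A\to G\to G/A\to 1$, producing an embedding $\Ho^2(G/A,D)/\im\tra\hookrightarrow\Ho^2(G,D)$ via inflation, where $\tra\colon\Hom(A,D)\to\Ho^2(G/A,D)$ is the transgression. By the first observation and Schur's formula, $\Ho^2(G/A,D)=\Ho^2(H/A,D)\oplus\Ho^2(K/A,D)\oplus\Hom(H/A\otimes K/A,D)$. The crucial point is that, because $[H,K]=1$, lifts to $G$ of elements of $H/A$ and of $K/A$ commute, so the commutator pairing $H/A\otimes K/A\to A$ of this extension vanishes; hence $\tra(\chi)=(\tra_H(\chi),\tra_K(\chi),0)$, where $\tra_H,\tra_K$ are the transgressions of $1\to A\to H\to H/A\to 1$ and $1\to A\to K\to K/A\to 1$. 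A short computation with $\Hom(-,D)$ then gives $\ker\tra_H=\Hom\big(A/(A\cap H'),D\big)$ and similarly for $K$, from which the subgroups $L,M$ and a copy of $N$ can be read off inside $\Ho^2(H/A,D)\oplus\Ho^2(K/A,D)$, identifying $\Ho^2(G/A,D)/\im\tra$ with $\big(\Ho^2(H/A,D)/L\oplus\Ho^2(K/A,D)/M\big)/N\oplus\Hom(H/A\otimes K/A,D)$.

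For (ii), I would first invoke Theorem A to replace $G$ by $\overline G=G/Z$, so that $\Ho^2(G,D)\cong\Ho^2(\overline G,D)/N$. Writing $\overline G$ as the quotient of $\overline H\times\overline K$ by the central subgroup $\{(a,a^{-1}):a\in\overline A\}$ and applying the five-term sequence to $1\to\overline A\to\overline H\times\overline K\to\overline G\to 1$, I would show the relevant transgression vanishes: its image is isomorphic to $\Hom(\overline H'\cap\overline K',D)=\Hom(1,D)=0$, precisely because $Z$ has been divided out. Inflation therefore embeds $\Ho^2(\overline G,D)$ into $\Ho^2(\overline H\times\overline K,D)=\Ho^2(H/Z,D)\oplus\Ho^2(K/Z,D)\oplus\Hom(H\otimes K,D)$, using $(H/Z)^{\mathrm{ab}}=H^{\mathrm{ab}}$. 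Passing to the quotient by $N$ on both sides yields an embedding of $\Ho^2(G,D)$ into $\Ho^2(\overline H\times\overline K,D)/N$; and since the commutator pairing again vanishes by $[H,K]=1$, the image of $N$ lies in the first two summands, so the target is exactly $\big(\Ho^2(H/Z,D)\oplus\Ho^2(K/Z,D)\big)/N\oplus\Hom(H\otimes K,D)$.

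The main obstacle I anticipate is the bookkeeping in the last step of (i): upgrading an order-level agreement to an actual isomorphism $\Ho^2(G/A,D)/\im\tra\cong\big(\Ho^2(H/A,D)/L\oplus\Ho^2(K/A,D)/M\big)/N$. This requires knowing precisely how $\im\tra$, the subgroups $L$ and $M$, and the diagonally embedded copy of $N$ interact inside $\Ho^2(H/A,D)\oplus\Ho^2(K/A,D)$; the key algebraic inputs are the identities $(A\cap H')\cap(A\cap K')=Z$ and $\ker\tra_H+\ker\tra_K=\Hom(A/Z,D)$, both resting on the containments $Z\le A\cap H'$ and $Z\le A\cap K'$ and both using divisibility of $D$ decisively. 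Verifying the vanishing of the commutator pairing on the transgressed classes, needed in both parts, is routine once the extension classes are written down, but it is the one place where the hypothesis $[H,K]=1$ is genuinely used.
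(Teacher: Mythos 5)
Your proposal is correct, and it reaches both embeddings by a route that is organized genuinely differently from the paper's. For (i), the paper first passes to $G/Z$ via Theorem A (whose proof goes through free presentations and Lemma \ref{lemma1}) and then invokes Theorem \ref{thmembd} together with the identification of $\im\big(\inf : \Ho^2(G/A,D) \to \Ho^2(G/Z,D)\big)$; you instead inflate directly from $G/A$ to $G$, using only five-term exactness $\Ker(\inf)=\im(\tra)$, so your part (i) needs no free-presentation input at all. By transitivity of inflation the resulting subgroup of $\Ho^2(G,D)$ is the same, and the two bookkeeping identities you isolate, namely $(A\cap H')\cap(A\cap K')=Z$ and $\ker\tra_H+\ker\tra_K=\Hom(A/Z,D)$ (both valid, via extension of homomorphisms using divisibility of $D$), do suffice to identify $\Ho^2(G/A,D)/\im(\tra)$ with $\big(\Ho^2(H/A,D)/L\oplus\Ho^2(K/A,D)/M\big)/N\oplus\Hom(H/A\otimes K/A,D)$. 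For (ii), where the paper obtains the monomorphism on $\Ho^2(G/Z,D)$ from Diagram 1 via Lemma \ref{lemma2}, Theorem \ref{thmnew} and Corollary \ref{corz1}, you present $G/Z$ externally as $(\bar H\times\bar K)/\bar A$ and get injectivity of inflation from the five-term sequence, the transgression vanishing because the restriction $\Hom(\bar H,D)\oplus\Hom(\bar K,D)\to\Hom(\bar A,D)$ is surjective once $\bar H'\cap\bar K'=1$ (the same sum-of-kernels argument plus divisibility); this is more elementary than the paper's machinery. The one ingredient common to both treatments is the vanishing of the commutator pairing on transgressed classes, which is exactly the cocycle computation in the paper's Theorems \ref{thmembd} and \ref{thmre}, and your justification of it via $[H,K]=1$ is sound. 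What the paper's longer route buys is reusable intermediate structure (the exact sequence of Theorem \ref{thmnew}, Proposition \ref{lastlemma}) that it later needs to decide when the embeddings of Theorem B are isomorphisms; what your route buys is a shorter, essentially self-contained proof of Theorem B itself, with part (i) purely cohomological and only part (ii) relying on Theorem A.
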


 In particular, for $D =  \mathbb{C}^{*}$, assertion (i)  of Theorem B provides a refinement of results from  \cite{EHS} and  \cite{JW}.
 
On the way to proving these results, we obtain some commutative diagrams and  exact sequences which might be of independent mathematical interest. Although our techniques are mostly cohomological, free presentation also shows up occasionally. In Section 2 we recall some known results and definitions, and establish  a basic commutative diagram, which we refer to several times in what follows.  Proofs of Theorems A and B are presented in Section 3. The final section contains several examples exhibiting various situations in which we determine whether or not  any embedding in Theorem B actually becomes isomorphism.

\vspace{.1in}

\section{Notations and preliminaries}

\vspace{.05in}

Let $F/R$ be a free presentation of $X$ and $N$ be a normal subgroup of $X$. Let $S/R$ be the induced free presentation of $N$ for some subgroup $S$ of $F$.  The following crucial result then  follows from \cite[Corollary 3.5]{LR}.

\begin{lemma}\label{lemma1}
The inflation homomorphism $\inf : \Ho^2(X/N, D) \to \Ho^2(X, D)$ is surjective if and only if $[F, R] = R \cap [F, S]$.
\end{lemma}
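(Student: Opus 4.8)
The plan is to establish the criterion by relating the surjectivity of the inflation map to a property of the Hopf-type formula for the relevant cohomology groups, using the five-term exact sequence arising from the extension $1 \to N \to X \to X/N \to 1$. First I would recall the low-degree exact sequence in cohomology with coefficients in the trivial module $D$, which gives an exact sequence of the form
\begin{equation*}
\Ho^1(X, D) \to \Ho^1(N, D)^{X} \to \Ho^2(X/N, D) \xrightarrow{\inf} \Ho^2(X, D).
\end{equation*}
The inflation map appearing here is exactly the one in the statement, so its image is the kernel of the next map (the transgression followed by restriction), and surjectivity of $\inf$ must be detected by showing that the obstruction to lifting classes from $X$ to $X/N$ vanishes. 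Since $D$ is divisible, hence injective as an abelian group, the functor $\Hom(-, D)$ is exact, and one can translate the cohomological statement into a statement about the integral homology groups $\Ho_2$ via the universal coefficient theorem; this is where divisibility of $D$ is essential and where Lemma~\ref{lemma1}'s hypothesis on $D$ gets used.

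Next I would invoke the Hopf formula for the Schur multiplier and its relative version. For the free presentation $F/R$ of $X$, Hopf's formula gives $\Ho_2(X, \mathbb{Z}) \cong (R \cap [F,F])/[F,R]$, and with the induced presentation $S/R$ of $N$ one has an analogous description of $\Ho_2(X/N, \mathbb{Z})$ in terms of $S$ and the relevant commutator subgroups. The condition $[F,R] = R \cap [F,S]$ is precisely the algebraic condition, expressed at the level of the free group $F$, that makes the transgression (equivalently, the relevant connecting map in the Lyndon--Hochschild--Serre spectral sequence) trivial on the piece that obstructs surjectivity. I would therefore compute both sides of the five-term sequence in terms of subgroups of $F$ using the presentation, rewrite the image and kernel of the inflation map as explicit quotients of commutator subgroups, and show that the two coincide exactly when $[F,R] = R \cap [F,S]$.

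The key step is translating between the cohomological exact sequence and the commutator-subgroup condition. Concretely, I expect the map $\Ho^1(N, D)^{X} \to \Ho^2(X/N, D)$ whose cokernel controls surjectivity to have an image that, after dualizing via $\Hom(-, D)$, corresponds to the quotient $(R \cap [F,S])/[F,R]$; the inflation map is then surjective precisely when this quotient is trivial, i.e.\ when $[F,R] = R \cap [F,S]$. This is exactly the content extracted from \cite[Corollary 3.5]{LR}, so the bulk of the work is to match the normalizations and identify the relevant subgroup of $F$ with the term appearing in the five-term sequence. The main obstacle I anticipate is the careful bookkeeping in this identification: one must verify that the induced presentation $S/R$ interacts correctly with the commutator subgroups and that the divisibility of $D$ genuinely converts the homological equality into the cohomological surjectivity, rather than merely into a statement up to extension problems. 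Once the dictionary between the free-presentation side and the cohomological side is set up correctly, the equivalence should follow directly, and indeed since the paper cites \cite[Corollary 3.5]{LR} for exactly this fact, the cleanest route is to state the relevant form of that corollary and verify that its hypotheses are met in our setting with the trivial, divisible coefficient module $D$.
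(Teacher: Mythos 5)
You should first be aware that the paper contains no argument for Lemma~\ref{lemma1} at all: it is stated as an immediate consequence of \cite[Corollary 3.5]{LR}, and that citation \emph{is} the proof. Since your proposal ends by recommending exactly that citation, your endpoint coincides with the paper's, and everything else in your proposal is supplementary material the paper never supplies. Your homological dictionary is indeed the right mechanism hiding behind the citation: for divisible $D$ the universal coefficient theorem gives natural isomorphisms $\Ho^2(X,D) \cong \Hom(\Ho_2(X,\mathbb{Z}),D)$ and $\Ho^2(X/N,D) \cong \Hom(\Ho_2(X/N,\mathbb{Z}),D)$ (the Ext terms vanish), inflation becomes the dual of the map $\Ho_2(X,\mathbb{Z}) \to \Ho_2(X/N,\mathbb{Z})$ induced by the quotient, and Hopf's formula identifies the kernel of that map as $(R\cap[F,S])/[F,R]$, since $\Ho_2(X,\mathbb{Z}) \cong (R\cap[F,F])/[F,R]$, $\Ho_2(X/N,\mathbb{Z}) \cong (S\cap[F,F])/[F,S]$, and the map is induced by the inclusion $R \subseteq S$.

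Two steps of your sketch, however, would fail as written. First, the five-term exact sequence terminates at $\Ho^2(X,D)$: there is no ``next map'' in it whose kernel is the image of inflation, and ``the transgression followed by restriction'' is not a map out of $\Ho^2(X,D)$ at all. Producing an exact continuation past $\Ho^2(X,D)$ is precisely the nontrivial content of \cite{LR} (and, for central $N$, of the Iwahori--Matsumoto sequence quoted in Section 2 of the paper), so at this point your plan is circular; fortunately the universal-coefficient argument above bypasses the five-term sequence entirely, so this step can simply be deleted. Second, exactness of $\Hom(-,D)$ converts injectivity of $\Ho_2(X,\mathbb{Z}) \to \Ho_2(X/N,\mathbb{Z})$ into surjectivity of inflation, but in the converse direction it yields only $\Hom\big((R\cap[F,S])/[F,R],\,D\big)=0$, which forces $[F,R]=R\cap[F,S]$ only when $D$ contains elements of every finite order (e.g.\ $D=\mathbb{C}^{*}$ or $\mathbb{Q}/\mathbb{Z}$). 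For $D=\mathbb{Q}$ and $X$ finite, both cohomology groups vanish and inflation is trivially surjective while the commutator condition can fail, so the ``only if'' half genuinely requires such a hypothesis on $D$ (or must be taken from \cite{LR} as stated). Note that the paper only ever uses the ``if'' direction, in the proof of Theorem~\ref{thm1}, and that direction your dictionary does establish for every divisible $D$.
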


For an arbitrary group $X$ and a subgroup $N$, by $\res^X_N$ we denote the restriction homomorphism from $\Hom(X,D)$ to $\Hom(N,D)$ as well as the restriction homomorphism from $\Ho^2(X,D)$ to $\Ho^2(N,D)$. When the meaning is clear from the context, we write $\res$ for $\res^X_N$.

Let us consider the following central exact sequence for an arbitrary group $X$ and a central subgroup $N$:
\[1 \to N \to X \to X/N \to 1.\]
Then we get the exact sequence 
\[0 \to \Hom(N \cap X', D) \stackrel{\tra}\to \Ho^2(X/N, D)  \stackrel{\inf}\to \Ho^2(X, D) \stackrel{\chi}\to \Ho^2(N, D) \oplus \Hom(X \otimes N, D),\]
where  $\tra:\Hom(N, D) \to \Ho^2(X/N, D)$ is the transgression homomorphism and $\chi = (\res, \psi)$ as defined by  Iwahori and Matsumoto  \cite{IM}.  To be more precise, $\res : \Ho^2(X,D) \to \Ho^2(N,D)$ is the restriction homomorphism and $\psi: \Ho^2(X,D)  \to \Hom(X \otimes N,D)$ is defined as $\psi(\xi)(\bar{x}, n) = f(x,n) - f(n,x)$ for all $\bar{x} = xX' \in  X/X'$ and $n \in N$, where $\xi \in \Ho^2(X,D)$ and  $f$ is a $2$-cocycle representative of $\xi$. 

Define a map
\begin{equation}\label{eqn1}
\theta' : \Ho^2(G,D) \to \Ho^2(H,D) \oplus \Ho^2(K,D) \oplus \Hom(H \otimes K, D)
\end{equation}
 by $\theta'=(\res^G_H, \res^G_K, \nu)$. Here  $\nu : \Ho^2(G,D) \to \Hom(H \otimes K, D)$ is a homomorphism defined as follows.
If $\xi \in \Ho^2(G,D)$ is represented by a $2$-cocycle $f$, then $\nu(\xi)$ is the homomorphism $\bar{f} \in \Hom(H \otimes K, D)$ defined by $\bar{f}(\bar{h} \otimes \bar{k})=f(h,k) - f(k,h)$, where $\bar{h} = hH'$ and $\bar{k} = kK'$. It is now not difficult to see that $\theta'$ is indeed a homomorphism.

Consider the natural homomorphisms $\alpha : AH'/H' \otimes K \to H \otimes K$, $\beta : H \otimes AK'/K' \to H \otimes K$, induced by obvious inclusion maps, and $\lambda :  H \otimes K \to H/A \otimes K/A$,  induced by natural projection. 
We now get the following exact sequence:
\[(AH'/H' \otimes K) \oplus (H \otimes AK'/K') \xrightarrow{\mu_1} H \otimes K \xrightarrow{\lambda} H/A \otimes K/A \to 0,\]
 where $\mu_1(x,y)=\alpha(x)+\beta(y)$.

We have natural epimorphisms $f:H \otimes A \rightarrow  H \otimes AK'/K'$ and $g:A \otimes K \rightarrow  AH'/
H' \otimes K$. Consider the isomorphism $\eta: K \otimes A \to A \otimes K$, which, on the generators, is defined by $\eta(k \otimes a) = (a \otimes k)$. Using this, we have an epimorphism $(f, g \circ \eta) : (H \otimes A) \oplus (K \otimes A) \rightarrow (H \otimes AK'/K') \oplus  (AH'/H' \otimes K)$. Let $\mu =\mu_1 \circ (f, g \circ \eta)$. Then $\im(\mu_1) = \im(\mu)$ and the above exact sequence leads to the  exact sequence:
\[ (H \otimes A) \oplus (K \otimes A) \xrightarrow{\mu} H \otimes K \xrightarrow{\lambda} H/A \otimes K/A \to 0.\]
 This  exact sequence then gives the  exact sequence

\[
 \xymatrix{
0 \;\;\ar[r] &\;\; \Hom(H/A \otimes K/A, D) \;\;\;\ar[r]^{\lambda^*}  &\;\;\;  \Hom(H \otimes K, D) \ar[d]^{\mu^*}\\
&  & \Hom(H \otimes A,D)\oplus \Hom(K \otimes A,D),}
\]
where the homomorphisms $\mu^*$ and $\lambda^*$ are induced by $\mu$ and $\lambda$  respectively. 

Let $\alpha : H/H' \oplus K/K' \rightarrow G/G'$ be the homomorphism induced by the inclusion maps $H \rightarrow G, K \rightarrow G$. Then $\alpha$   is clearly onto. Now $\alpha$ induces an epimorphism $(H/H' \oplus K/K')\otimes A \to G \otimes A$, which in turn induces a monomorphism $\alpha^*:\Hom(G \otimes A, D) \to \Hom(H \otimes A, D) \oplus \Hom(K \otimes A, D)$.
Let
$\bigtriangleup:H^2(A,D) \to H^2(A,D) \oplus H^2(A,D)$ be defined by $\bigtriangleup(\xi)=(\xi,\xi)$ for $\xi \in H^2(A,D)$.
\vspace{.1in}

Set $\bar{G} = G/A$, $\bar{H} = H/A$ and $\bar{K} = K/A$. Let $\xi \in \Ho^2(\bar{G},D)$ and $f$ be a $2$-cocycle representing $\xi$. Recall that 
$$\theta : \Ho^2(\bar{G},D) \to \Ho^2(\bar{H},D) \oplus \Ho^2(\bar{K}, D) \oplus \Hom(\bar{H} \otimes \bar{K}, D)$$
is the isomorphism  defined by 
 $$\theta(\xi) = (\res^{\bar{G}}_{\bar{H}}(\xi), \res^{\bar{G}}_{\bar{K}}(\xi), \nu_1(\xi)),$$ 
 where  $\nu_1 : \Ho^2(\bar{G},D) \to \Hom(\bar{H} \otimes \bar{K}, D)$ is a homomorphism given by  $\nu_1(\xi)(\tilde{h} \otimes \tilde{k})=f(h,k) - f(k,h)$, with $h \in \bar{H}$, $k \in \bar{K}$, $\tilde{h} = h\bar{H}'$ and $\tilde{k} = k\bar{K}'$.
Take $X_1 = \Ho^2(A, D) \oplus \Hom(H \otimes A, D)$, $X_2 = \Ho^2(A, D) \oplus \Hom(K \otimes A, D)$, $X_3 = \Hom(H \otimes A, D) \oplus \Hom(K \otimes A, D)$ and $Y = \Ho^2(A,D) \oplus \Ho^2(A,D)$. We now get the 
 following diagram (Diagram 1) with exact columns. In this diagram,  for want of space, we suppress the use of $D$, i.e., we write $\Hom(X,D)$ as $\Hom(X)$ and $\Ho^2(X,D)$ as $\Ho^2(X)$ for a given group $X$.

\[
 \xymatrix{
    0     \ar[d]&    &0 \ar[d]\\
   \Hom(A \cap G')   \ar[d]_{\tra}  & \xrightarrow{(\res,\res)}  &\Hom(A \cap H') \oplus \Hom(A \cap K') \ar[d]^{(\tra, \tra, 0)}\\
    \Ho^2(G/A)  \ar[d]_{\inf} &  \xrightarrow{\;\;\;\;\;\theta\;\;\;\;\;} &\Ho^2(H/A) \oplus \Ho^2(K/A) \oplus \Hom(H/A \otimes K/A) \ar[d]^{(\inf, \inf, \lambda^*)}\\
  \Ho^2(G)  \ar[d]_{(\res, \psi)}  & \xrightarrow{\;\;\;\;\;\theta'\;\;\;\;\;} &\Ho^2(H) \oplus \Ho^2(K) \oplus \Hom(H \otimes K) \ar[d]^{\big((\res, \psi), (\res, \psi), \mu^*\big)} \\
\Ho^2(A) \oplus \Hom(G \otimes A)  & \ar[rd]_{(\bigtriangleup,\alpha^*, \alpha^*)} &  X_1 \oplus X_2 \oplus X_3 \ar[d]^{\cong} \\
&  &Y \oplus X_3 \oplus X_3.}
\]
\[\text{Diagram 1}\] \\

\begin{lemma}
Diagram 1 is commutative.
\end{lemma}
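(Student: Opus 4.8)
The plan is to establish commutativity one square at a time: the diagram consists of a top square (built from $\tra$, $(\res,\res)$, $\theta$, $(\tra,\tra,0)$), a middle square (from $\inf$, $\theta$, $\theta'$, $(\inf,\inf,\lambda^*)$), and a bottom square/triangle (from $(\res,\psi)$, $\theta'$, the right column map $((\res,\psi),(\res,\psi),\mu^*)$, the diagonal $(\bigtriangleup,\alpha^*,\alpha^*)$, and the isomorphism $\cong$). Throughout I would fix a $2$-cocycle $f$ representing a class $\xi$ and, crucially, a set-theoretic section $s\colon \bar G \to G$ of $G \to \bar G = G/A$ that respects the central product: since $G=HK$ with $[H,K]=1$ and $\bar G = \bar H \times \bar K$, I take $s$ multiplicative across this direct product with $s(\bar H)\subseteq H$ and $s(\bar K)\subseteq K$. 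Each square then reduces to comparing the two composites either on cocycle representatives or on generators $x \otimes a$ of the tensor factors.

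For the top square, $\theta\circ\tra = (\tra,\tra,0)\circ(\res,\res)$, the first two coordinates are exactly the naturality of transgression for the morphism of central extensions $(A\to H\to\bar H)\to(A\to G\to\bar G)$ induced by $H\hookrightarrow G$ and the identity on $A$ (and likewise for $K$), giving $\res^{\bar G}_{\bar H}\circ\tra = \tra\circ\res$ on $\Hom(A\cap G',D)$. The third coordinate, $\nu_1\circ\tra=0$, is the first place the central product enters: with the section $s$ chosen above, the factor set $(x,y)\mapsto s(x)s(y)s(xy)^{-1}$ is symmetric on $\bar H\times\bar K$, since lifts of $\bar H$ and $\bar K$ lie in $H$ and $K$ and hence commute. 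Thus $\tra(\chi)$ has a representative $g$ with $g(h,k)=g(k,h)$ for $h\in\bar H$, $k\in\bar K$, so $\nu_1(\tra\chi)=0$.

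For the middle square, $\theta'\circ\inf = (\inf,\inf,\lambda^*)\circ\theta$, the first two coordinates are the functoriality of inflation--restriction: apply $\Ho^2(-,D)$ to the commuting square of groups formed by $H\hookrightarrow G$, $\bar H\hookrightarrow\bar G$ and the quotient maps $H\to\bar H$, $G\to\bar G$. For the third coordinate I would verify $\nu\circ\inf=\lambda^*\circ\nu_1$ directly: inflation replaces a cocycle $g$ on $\bar G$ by $g\circ(\pi\times\pi)$ with $\pi\colon G\to\bar G$, and after unwinding $\lambda$ both composites send $\bar h\otimes\bar k$ to $g(hA,kA)-g(kA,hA)$. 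This is a short cochain check.

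The bottom square is where the genuine work lies. I must show $\cong\,\circ\,((\res,\psi),(\res,\psi),\mu^*)\circ\theta' = (\bigtriangleup,\alpha^*,\alpha^*)\circ(\res,\psi)$. The $Y$-coordinate reduces to transitivity of restriction, $\res^H_A\circ\res^G_H=\res^G_A=\res^K_A\circ\res^G_K$, and the first $X_3$-coordinate is immediate since both $\psi_H(\res^G_H\xi)$ and the $H$-part of $\alpha^*\psi(\xi)$ send $h\otimes a$ to $f(h,a)-f(a,h)$, and similarly for $K$. The substantive identity is the second $X_3$-coordinate, $\mu^*\nu(\xi)=\alpha^*\psi(\xi)$, which forces one to compute $\mu=\mu_1\circ(f,g\circ\eta)$ on generators: $h\otimes a\mapsto hH'\otimes aK'$ via $\beta$, and $k\otimes a\mapsto aH'\otimes kK'$ via the transposition $\eta$ followed by $g$ and $\alpha$. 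Evaluating $\nu(\xi)$ on these and matching with $\alpha^*\psi(\xi)$ is where every tensor identification $f,g,\eta,\alpha,\beta$ must be reconciled with the antisymmetric commutator pairing. I expect the real obstacle to be precisely the $K\otimes A$ contribution, routed through $\eta$ so that the central element $a$ is placed in the $H$-slot: the pairing there is governed by $f(a,k)-f(k,a)$, and the orientation conventions built into $\mu$, $\eta$ and $\alpha^*$ (together with the relation $[H,K]=1$) must be pinned down exactly so that this matches the $\psi$-pairing $f(k,a)-f(a,k)$ on the diagonal side. Getting these sign and ordering conventions to align across the two composites is the crux of the verification.
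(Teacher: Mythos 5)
Your treatment of the top and middle rectangles is correct, and in fact supplies the details that the paper itself waves off as ``a routine check'': the multiplicative section $s(\bar h\bar k)=s_H(\bar h)s_K(\bar k)$ (available because $\bar G=\bar H\times\bar K$ and the lifts land in the commuting subgroups $H$ and $K$) does force the transgression factor set to be symmetric on $\bar H\times\bar K$, hence $\nu_1\circ\tra=0$, and your cochain identity $\nu\circ\inf=\lambda^{*}\circ\nu_1$ is exactly the right verification of the third coordinate of the middle square.

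The gap is that you stop at precisely the step you yourself identify as the crux: the identity $\mu^{*}\circ\nu=\alpha^{*}\circ\psi$ on the $K\otimes A$ summand is never verified, only declared to need its conventions ``pinned down'' --- and it \emph{cannot} be verified as stated, because with the paper's definitions the two composites are negatives of each other there. You wrote both expressions down: since $\mu$ routes $k\otimes a$ through $\eta$ to $aH'\otimes kK'$, the second $X_3$-coordinate of the right-hand composite sends $k\otimes a$ to $f(a,k)-f(k,a)$, while the $K\otimes A$-component of $\alpha^{*}\psi(\xi)$ sends it to $f(k,a)-f(a,k)$; these differ by a sign, and the sign is not killed by $[H,K]=1$. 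Concretely, take $p$ odd, $G=\mathbb{Z}_p^{(3)}$ with basis $\{a,h,k\}$, $H=\langle a,h\rangle$, $K=\langle a,k\rangle$, $A=\langle a\rangle$, and the bilinear $2$-cocycle $f(x,y)=x_a y_k/p$ with values in $\mathbb{Q}/\mathbb{Z}\subseteq\mathbb{C}^{*}$, where $x=a^{x_a}h^{x_h}k^{x_k}$: then one composite gives $1/p$ on $k\otimes a$ and the other gives $-1/p$. So the bottom square commutes only up to sign on that summand. A complete proof must therefore either correct a convention (for instance set $\eta(k\otimes a)=a^{-1}\otimes k$, or insert $-1$ on the last $X_3$ factor of the unlabelled isomorphism) and then check the corrected identity, or else prove the weaker statement that the square commutes up to an automorphism of a direct summand and observe that this suffices for every later use of Diagram 1 (Lemma~\ref{lemma2}, Theorem~\ref{thmnew} and Proposition~\ref{lastlemma} use only kernels and vanishing conditions, which are insensitive to a sign on a summand). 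To be fair, the paper's own proof has the same defect, since it declares $(\alpha^{*},\alpha^{*})\circ\psi=\big((\psi,\psi),\mu^{*}\big)\circ\theta'$ to be clear from the definitions; but a proposal that explicitly isolates this identity as the one substantive point cannot then leave it unresolved.
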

\begin{proof}
It is a routine check to see that the topmost and middle rectangles are commutative. Observe that $\res^H_A \circ \res^G_H = \res^G_A$ and $\res^K_A \circ \res^G_K = \res^G_A$. It is also clear from the definitions that $(\alpha^*, \alpha^*) \circ \psi = \big((\psi, \psi), \mu^*\big) \circ {\theta}'$. Thus it follows that the bottom part of the diagram is also commutative. \hfill $\Box$

\end{proof}

\section{Proofs}

In this section we present  proofs of the results stated in the introduction. We start with the following result.  

\begin{thm}\label{thm1}
For any central subgroup $B$ of $G$ contained in $H' \cap K' \;(= Z)$, the inflation homomorphism $\inf : \Ho^2(G/B,D) \to \Ho^2(G,D)$ is surjective.
\end{thm}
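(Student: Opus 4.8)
The plan is to deduce the statement from Lemma~\ref{lemma1}. Fix a free presentation $F/R$ of $G$ with quotient map $\pi : F \to G$, and set $S = \pi^{-1}(B)$, so that $S/R$ is the induced free presentation of $B$ and $R \le S \normal F$. By Lemma~\ref{lemma1}, surjectivity of $\inf$ is equivalent to $[F,R] = R \cap [F,S]$. Since $R \le S$ gives $[F,R] \le [F,S]$, I would aim only at the single inclusion $[F,S] \subseteq [F,R]$: together with the obvious $[F,R] \subseteq [F,S]$ this yields $[F,S] = [F,R]$, whence $R \cap [F,S] = R \cap [F,R] = [F,R]$, as $[F,R] \le R$. (Incidentally, $B \le H' \cap K'$ is automatically central in $G$, since $H'$ is centralized by $K$ and $K'$ by $H$; so the centrality hypothesis comes for free.)

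Next I would record the structural facts about $F$ that feed the computation. Put $T_H = \pi^{-1}(H)$ and $T_K = \pi^{-1}(K)$; these are normal in $F$ because $H, K \normal G$. As $G = HK$ and $R \le T_H$, one has $F = T_H T_K$, and as $[H,K] = 1$ one has $[T_H, T_K] \le R$. The hypothesis $B \le H' \cap K'$ enters through the observation that $\pi(T_H') = H'$ and $\pi(T_K') = K'$, so each $s \in S$ can be written in two ways, $s = s_H r_H = s_K r_K$ with $s_H \in T_H'$, $s_K \in T_K'$ and $r_H, r_K \in R$. This simultaneous expressibility of $s$ as a product of $H$-commutators and as a product of $K$-commutators is the heart of the matter.

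The main computation is then carried out modulo $[F,R]$, inside which the image of $R$ is central. First I would use a three subgroups lemma argument, with input $[T_H, T_K] \le R$, to show $[T_H, T_K'] \le [F,R]$ and $[T_K, T_H'] \le [F,R]$. Then, for a generator $[f,s]$ of $[F,S]$, I would factor $f = f_H f_K$ with $f_H \in T_H$, $f_K \in T_K$, expand $[f,s] = [f_H,s]^{f_K}[f_K,s]$, and reduce modulo $[F,R]$. Using $s \equiv s_K \pmod{R}$ in the first commutator and $s \equiv s_H \pmod{R}$ in the second (the factors $r_H, r_K$ drop out, being central modulo $[F,R]$), these land in $[T_H, T_K']$ and $[T_K, T_H']$ respectively, both trivial modulo $[F,R]$. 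Hence $[f,s] \in [F,R]$, and $[F,S] \subseteq [F,R]$.

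The step I expect to be the main obstacle is exactly this last reduction, and the reason is instructive: a one-sided estimate such as $[F,S] \subseteq [F,T_H'][F,R] \subseteq \gamma_3(T_H)[F,R]$ is true but useless, since intersecting $\gamma_3(T_H)[F,R]$ with $R$ need not land in $[F,R]$. What rescues the argument is using $B \le H' \cap K'$ in full strength rather than merely $B \le A$: it lets me pair $s$ against the $K$-part of $f$ as an $H$-commutator and against the $H$-part of $f$ as a $K$-commutator, so that the single relation $[T_H, T_K] \le R$ can be fed into the three subgroups lemma in both directions simultaneously.
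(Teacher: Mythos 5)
Your proposal is correct and follows essentially the same route as the paper's proof: both reduce to the criterion $[F,R]=R\cap[F,S]$ of Lemma~\ref{lemma1}, lift the central product structure to the free presentation (your $F=T_HT_K$, $[T_H,T_K]\le R$, $S\subseteq T_H'R\cap T_K'R$ are the paper's $F=S_1S_2$, $[S_1,S_2]\subseteq R$, $S\subseteq(S_1'\cap S_2')R$), and invoke the three subgroups lemma with $[T_H,T_K]\le R$ in both directions to get $[T_H,T_K']$ and $[T_K,T_H']$ inside $[F,R]$. Your element-by-element computation modulo $[F,R]$ is just the generator-level form of the paper's subgroup-level identities $[S_1S_2,S]=[S_1,S][S_1,S,S_2][S_2,S]$ and $[S_1,S_2'R]=[S_1,R][S_1,S_2'][S_1,S_2',R]$, so the two arguments coincide in substance.
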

\begin{proof}
Let $F/R$ be a free presentation of $G$. Then the normal subgroups $H$, $K$ and $B$ can be freely presented as $ S_1/R$, $S_2/R$ and $S/R$ respectively, where $S_1$, $S_2$ and $S$ are  normal subgroups of $F$. Further, $Z \cong (S_1'\cap S_2')R/R$. Note that $F = S_1S_2$, $S \subseteq (S_1' \cap S_2')R$ and $[S_1, S_2] \subseteq R$.

By Lemma \ref{lemma1}, it is enough to prove that $[F,R] = R \cap[F,S]$. Since $[S_1,S,S_2] \subseteq [S_2,S]$, 
we have  
\begin{eqnarray*}
R \cap [F,S] &=& [F,S]=[S_1S_2,S]=[S_1,S][S_1,S,S_2] [S_2,S]\\ &=&[S_1,S][S_2,S].
\end{eqnarray*}
Observe that $[S_1,S] \subseteq [S_1, S_2'R] = [S_1, R][S_1,S_2'][S_1,S_2',R]$. Since both $[S_1,S_2,S_2]$ and $[S_2, S_1, S_2]$ are contained in $[F, R]$, by the three subgroup lemma $[S_1,S_2'] \subseteq [F,R]$. Hence  $[S_1,S] \subseteq [F,R]$. Similarly $[S_2,S] \subseteq [F,R]$. Therefore  $R \cap [F,S] \subseteq [F, R]$. Since $[F, R] \subseteq R \cap [F,S]$, it follows that  $[F,R] = R \cap[F,S]$, and the proof is complete. \hfill $\Box$

\end{proof}

 \noindent{\it Proof of Theorem A.}  It follows from Theorem \ref{thm1} that $\inf$, in the following exact sequence,  is surjective.
\[0 \to \Hom(B, D) \stackrel{\tra}\to \Ho^2(G/B, D)  \stackrel{\inf}\to \Ho^2(G, D).\]
Since $\Hom(B, D) \cong \im(\tra) = \Ker(\inf)$, the proof is complete.   \hfill $\Box$\\

We now mainly concentrate on the homomorphism $\theta'$ defined in \eqref{eqn1}. We start with  the following result  about the kernel of  $\theta'$.

\begin{lemma}\label{lemma2}
$\Ker (\theta')= \{\inf(\eta) \mid \eta \in \theta^{-1} \big(\im (\tra,\tra,0)\big)\}$.
\end{lemma}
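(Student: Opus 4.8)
The plan is to run a diagram chase in Diagram 1, establishing the two inclusions of the claimed equality separately and relying throughout on the commutativity of the rectangles from the previous lemma together with the exactness of the two columns of Diagram 1.

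For the inclusion $\{\inf(\eta) \mid \eta \in \theta^{-1}(\im(\tra,\tra,0))\} \subseteq \Ker(\theta')$, I would take $\eta$ with $\theta(\eta) \in \im(\tra,\tra,0)$. Exactness of the right-hand column at $\Ho^2(H/A,D) \oplus \Ho^2(K/A,D) \oplus \Hom(H/A \otimes K/A, D)$ gives $\im(\tra,\tra,0) = \Ker(\inf,\inf,\lambda^*)$, so $(\inf,\inf,\lambda^*)(\theta(\eta)) = 0$. The commutativity of the middle rectangle, namely $\theta' \circ \inf = (\inf,\inf,\lambda^*) \circ \theta$, then forces $\theta'(\inf(\eta)) = 0$, that is, $\inf(\eta) \in \Ker(\theta')$.

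For the reverse inclusion I would begin with $\zeta \in \Ker(\theta')$ and manufacture a preimage. The first step is to show $\zeta \in \im(\inf)$, which by exactness of the left-hand column at $\Ho^2(G,D)$ is equivalent to $(\res,\psi)(\zeta) = 0$. Feeding $\zeta$ into the bottom (commutative) part of the diagram and using $\theta'(\zeta) = 0$, the right-hand path from $\Ho^2(G,D)$ to $Y \oplus X_3 \oplus X_3$ vanishes, so commutativity yields $(\bigtriangleup,\alpha^*,\alpha^*)\bigl((\res,\psi)(\zeta)\bigr) = 0$. Since $\bigtriangleup$ is the diagonal embedding and $\alpha^*$ is a monomorphism, the map $(\bigtriangleup,\alpha^*,\alpha^*)$ is injective, whence $(\res,\psi)(\zeta) = 0$ and therefore $\zeta = \inf(\eta)$ for some $\eta \in \Ho^2(G/A,D)$. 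Finally, commutativity of the middle rectangle gives $(\inf,\inf,\lambda^*)(\theta(\eta)) = \theta'(\inf(\eta)) = \theta'(\zeta) = 0$, and exactness of the right-hand column yields $\theta(\eta) \in \Ker(\inf,\inf,\lambda^*) = \im(\tra,\tra,0)$, placing $\eta \in \theta^{-1}(\im(\tra,\tra,0))$ as required.

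The only genuinely delicate point is the vanishing of the full pair $(\res,\psi)(\zeta)$. Its $\res$-component is immediate, since $\res^G_A(\zeta) = \res^H_A(\res^G_H(\zeta)) = 0$, but the vanishing of $\psi(\zeta)$ rests on the injectivity of $\alpha^*$, equivalently on the surjectivity of the map $(H/H' \oplus K/K') \otimes A \to G \otimes A$ induced by $\alpha$. This is exactly where the hypothesis $G = HK$ (so that $\alpha$ is onto) enters, and I expect it to be the crux of the argument; everything else is a formal chase.
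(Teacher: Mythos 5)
Your proof is correct and follows essentially the same route as the paper's: both directions are the same diagram chase, using injectivity of $(\bigtriangleup,\alpha^*,\alpha^*)$ together with commutativity of the bottom part of Diagram 1 to get $(\res,\psi)(\zeta)=0$, exactness of the left column to lift to $\Ho^2(G/A,D)$, and commutativity of the middle rectangle plus exactness of the right column to identify the preimage with $\theta^{-1}\big(\im(\tra,\tra,0)\big)$. Your closing observation that injectivity of $\alpha^*$ ultimately rests on surjectivity of $\alpha$, i.e.\ on $G=HK$, is exactly the point the paper builds into its setup in Section 2.
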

\begin{proof}

Let $\xi \in \ker(\theta')$. By the commutativity of the bottommost part of Diagram 1, it follows that $(\bigtriangleup,\alpha^*, \alpha^*)(\res^G_A, \psi)(\xi)=0$. Since $\alpha^*$ is a monomorphism and $\bigtriangleup = (\id, \id)$,  it follows that $(\res^G_A, \psi)(\xi)=0$.  Now the existence of  $\eta \in \Ho^2(G/A,D)$ such that $\xi=\inf(\eta)$ is guaranteed by the  exactness of the left column in Diagram 1. Thus $\Ker (\theta') \subseteq \im \big(\inf:\Ho^2(G/A,D) \rightarrow \Ho^2(G,D) \big)$.

By the commutativity of the middle rectangle of Diagram 1, it follows that 
 \[0 =\theta'(\xi)= \theta'( \inf(\eta)) = \theta' \circ \inf(\eta)=(\inf,\inf,\lambda^*) \circ \theta(\eta).\]
 Again invoking Diagram 1, we get $\theta(\eta) \in \im (\tra,\tra,0)$. Hence $\eta \in \theta^{-1}(\im (\tra,\tra,0))$. That 
 $\theta'(\inf(\eta))=0$ for $\eta \in \theta^{-1} \big(\im (\tra,\tra,0)\big)$ follows from the commutativity of Diagram 1 with the right column exact. This completes the proof.
  $\hfill\square$

\end{proof}

We have an exact sequence 
\[0 \rightarrow H' \cap K' \xrightarrow{\alpha_1} (A \cap H') \oplus (A \cap K') \xrightarrow{\alpha_2} A \cap G' \to 0,\]
which induces an exact sequence 
\[0 \rightarrow \Hom(A \cap G',D) \xrightarrow{\alpha_2^*} \Hom(A \cap H',D) \oplus \Hom(A \cap K',D) \xrightarrow{\alpha_1^*} \Hom(Z,D) \to 0,\]
in which $\alpha_2^*$ is the homomorphism $(\res,\res)$.

The homomorphism $\alpha_1^*$ being surjective, for any  $f \in \Hom(Z,D)$, there exists $g \in \Hom(A \cap H',D) \oplus \Hom(A \cap K',D)$ such that $f=\alpha_1^*(g)$.  Let $g_1 \in \Hom(A \cap H',D) \oplus \Hom(A \cap K',D)$ be another element such that $f=\alpha_1^*(g_1)$. Then there exists $\nu \in \Hom(A \cap G',D)$ such that $g-g_1=\alpha_2^*(\nu)$. For the convenience of writing, set $\zeta = \inf \circ  \theta^{-1}$ (recall that $\theta$ is an isomorphism). Now, using the commutativity of the topmost rectangle of Diagram 1, we get
\begin{eqnarray*}
\zeta \circ (\tra,\tra,0)(g) &=& \zeta \circ (\tra,\tra,0)(g_1)+\zeta \circ (\tra,\tra,0)(\alpha_2^*(\nu))\\
&=& \zeta \circ (\tra,\tra,0)(g_1)+ \zeta \circ \theta \circ \tra(\nu)\\
&=&\zeta \circ (\tra,\tra,0)(g_1).
\end{eqnarray*}
Hence $\zeta \circ (tra,tra,0)$ is independent of the choice of $g \in \Hom(A \cap H',D) \oplus \Hom(A \cap K',D)$ with $\alpha_1^*(g)=f$. Setting $\chi(f) = \zeta \circ (\tra,\tra,0)$, we get a well defined map $\chi : \Hom(Z,D) \to \Ho^2(G,D)$.
It is now clear that $\chi$ is a homomorphism. 

\begin{thm}\label{thmnew}
The following sequence is exact:
\[0  \to \Hom(Z,D) \xrightarrow{\chi} \Ho^2(G,D) \xrightarrow{\theta'}  \Ho^2(H,D) \oplus \Ho^2(K,D) \oplus \Hom(H \otimes K, D).\]
\end{thm}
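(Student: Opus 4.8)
The plan is to prove exactness at the two relevant spots. Exactness at $\Ho^2(G,D)$, i.e. $\Ker(\theta') = \im(\chi)$, is the heart of the matter and is where I would spend most of the effort; exactness at $\Hom(Z,D)$, i.e. injectivity of $\chi$, should be comparatively routine.

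\textbf{Identifying the kernel.} First I would invoke Lemma \ref{lemma2}, which already describes $\Ker(\theta')$ as $\{\inf(\eta) \mid \eta \in \theta^{-1}(\im(\tra,\tra,0))\}$. The goal is therefore to show that this set coincides with $\im(\chi)$. Unwinding the definition of $\chi$, recall that $\chi(f) = \zeta \circ (\tra,\tra,0)(g) = \inf \circ \theta^{-1} \circ (\tra,\tra,0)(g)$, where $g \in \Hom(A\cap H',D)\oplus\Hom(A\cap K',D)$ is any preimage of $f$ under the surjection $\alpha_1^*$. Since $\alpha_1^*$ is onto, as $f$ ranges over $\Hom(Z,D)$ the element $g$ ranges over all of $\Hom(A\cap H',D)\oplus\Hom(A\cap K',D)$, and consequently $(\tra,\tra,0)(g)$ ranges over all of $\im(\tra,\tra,0)$. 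Applying $\inf\circ\theta^{-1}$ then shows directly that $\im(\chi) = \{\inf(\theta^{-1}(y)) \mid y \in \im(\tra,\tra,0)\}$, which is precisely the description of $\Ker(\theta')$ from Lemma \ref{lemma2}. This gives exactness at $\Ho^2(G,D)$.

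\textbf{Injectivity of $\chi$.} To show $\Ker(\chi)=0$, suppose $\chi(f)=0$ for some $f\in\Hom(Z,D)$, and choose $g$ with $\alpha_1^*(g)=f$. Then $\inf(\theta^{-1}((\tra,\tra,0)(g)))=0$. By the exactness of the left column of Diagram 1, $\Ker(\inf)=\im(\tra:\Hom(A\cap G',D)\to\Ho^2(G/A,D))$, so $\theta^{-1}((\tra,\tra,0)(g))=\tra(\nu)$ for some $\nu\in\Hom(A\cap G',D)$. Applying the isomorphism $\theta$ and using the commutativity of the topmost rectangle of Diagram 1, which says $\theta\circ\tra = (\tra,\tra,0)\circ\alpha_2^*$ (recall $\alpha_2^*=(\res,\res)$), I obtain $(\tra,\tra,0)(g)=(\tra,\tra,0)(\alpha_2^*(\nu))$. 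I would then argue that the map $(\tra,\tra,0)$ is injective on $\Hom(A\cap H',D)\oplus\Hom(A\cap K',D)$, since each component $\tra:\Hom(A\cap H',D)\to\Ho^2(H/A,D)$ is injective by the exactness of the standard central sequence recalled in Section 2. This yields $g=\alpha_2^*(\nu)$, whence $f=\alpha_1^*(g)=\alpha_1^*(\alpha_2^*(\nu))=0$ by the exactness of the sequence $0\to\Hom(A\cap G',D)\xrightarrow{\alpha_2^*}\Hom(A\cap H',D)\oplus\Hom(A\cap K',D)\xrightarrow{\alpha_1^*}\Hom(Z,D)\to 0$ (composition of consecutive maps is zero).

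\textbf{Anticipated obstacle.} The only genuinely delicate point is the injectivity of $(\tra,\tra,0)$ used in the second part: one must be careful that the transgression on each factor is injective and that no cancellation between the two $\tra$-components can occur. This follows because the two components act on independent summands $\Hom(A\cap H',D)$ and $\Hom(A\cap K',D)$, so injectivity is componentwise. Once this is secured, both exactness statements drop out cleanly from Lemma \ref{lemma2}, the surjectivity of $\alpha_1^*$, and the commutativity relations already established in Diagram 1.
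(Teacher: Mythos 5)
Your proposal is correct and follows essentially the same route as the paper: injectivity of $\chi$ via the exactness of the left column of Diagram 1, the commutativity of its topmost rectangle ($\theta \circ \tra = (\tra,\tra,0)\circ \alpha_2^*$), the injectivity of $(\tra,\tra,0)$, and the exactness of the $\alpha_2^*,\alpha_1^*$ sequence; and exactness at $\Ho^2(G,D)$ by matching $\im(\chi)$ with the description of $\Ker(\theta')$ in Lemma \ref{lemma2}. The only difference is cosmetic: you spell out why $\im(\chi)$ coincides with the set in Lemma \ref{lemma2} (via surjectivity of $\alpha_1^*$ and well-definedness of $\chi$), a step the paper dismisses as ``now clear.''
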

\begin{proof}
Suppose that  $f \in \Hom(Z,D)$ and $\chi(f)=0$. Then $\inf \circ \theta^{-1} \circ (\tra,\tra,0)(g)=0$ for some $g \in \Hom(A \cap H',D) \oplus \Hom(A \cap K',D)$ such that $f=\alpha_1^*(g)$. Thus  there exists $\eta \in \Hom(A \cap G',D)$ such that $\theta^{-1}\circ (\tra,\tra,0)(g)=\tra(\eta)$ by the commutativity of Diagram 1. Then $(\tra,\tra,0)(g)=\theta \circ \tra(\eta)=(\tra,\tra,0) \circ (\res,\res) (\eta)=(\tra,\tra,0) \circ \alpha_2^*(\eta)$. Since $(\tra,\tra,0)$ is a monomorphism, we have $g=\alpha_2^*(\eta)$. Thus $f=\alpha_1^* \circ \alpha_2^*(\eta)=0$, which, $f$ being an arbitrary element, proves that   $\chi$ is a monomorphism.
 That $\im(\chi) = \Ker (\theta')$ is now clear from Lemma \ref{lemma2}, and the proof is complete. $\hfill\square$

\end{proof}

The following is an immediate consequence of the preceding theorem.

\begin{cor}\label{corz1}
If $Z=1$, then 
$$\theta' : \Ho^2(G,D)\to \Ho^2(H,D) \oplus \Ho^2(K,D) \oplus \Hom(H \otimes K, D)$$
 is a monomorphism.
\end{cor}

Using the argument as in Theorem \ref{thmnew} and the observation that 
 $$\inf\big(\theta^{-1}(\Hom(H/A \otimes K/A,D))\big) \cap \ker(\theta') = \{0\},$$ 
 which follows from the commutativity of the middle rectangle of Diagram 1, we get: 
 
\begin{cor} 
The following sequence is exact
\[0 \to \Hom(Z,D) \oplus \Hom(H/A \otimes K/A,D) \xrightarrow{(\chi, \inf \circ \theta^{-1})} \Ho^2(G,D)  \xrightarrow{(\res,\res)}
\Ho^2(H,D) \oplus \Ho^2(K,D).\]
In particular,
$\Hom(Z,D) \oplus \Hom(H/A \otimes K/A,D)$ embeds in $\Ho^2(G,D)$.
\end{cor}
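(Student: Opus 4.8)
The plan is to prove exactness at the two nonzero terms and then read off the asserted embedding, using only Diagram 1, Theorem~\ref{thmnew}, and the observation recorded just before the statement. First I would settle exactness at the left term, i.e. injectivity of $(\chi,\inf\circ\theta^{-1})$. Recall from Theorem~\ref{thmnew} that $\chi$ is a monomorphism with $\im(\chi)=\Ker(\theta')$. I would then check separately that $\inf\circ\theta^{-1}$ is injective on the summand $\Hom(H/A\otimes K/A,D)$: if $\inf(\theta^{-1}(0,0,\phi))=0$ then $\theta^{-1}(0,0,\phi)\in\Ker(\inf)=\im(\tra)$, and commutativity of the topmost rectangle of Diagram 1 gives $\theta(\im(\tra))\subseteq\im(\tra,\tra,0)$, whose elements have trivial third coordinate; comparing third coordinates forces $\phi=0$. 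Combining these two injectivity facts with the stated observation $\inf\big(\theta^{-1}(\Hom(H/A\otimes K/A,D))\big)\cap\Ker(\theta')=\{0\}$ yields injectivity of the direct-sum map: from $\chi(f)+\inf(\theta^{-1}(0,0,\phi))=0$ one reads off that $\inf(\theta^{-1}(0,0,\phi))=-\chi(f)$ lies in this trivial intersection, so both summands vanish and hence $f=0$ and $\phi=0$.

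The easy half of exactness at $\Ho^2(G,D)$, namely $\im\subseteq\Ker(\res,\res)$, I would handle next. For the $\chi$-summand this is immediate because $\im(\chi)=\Ker(\theta')$ and $(\res^G_H,\res^G_K)$ is the first pair of coordinates of $\theta'$. For the other summand I would use the middle rectangle of Diagram 1, namely $\theta'\circ\inf=(\inf,\inf,\lambda^*)\circ\theta$, to compute $\theta'\big(\inf(\theta^{-1}(0,0,\phi))\big)=(0,0,\lambda^*(\phi))$, whose first two coordinates vanish.

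The reverse inclusion $\Ker(\res,\res)\subseteq\im$ is the heart of the argument. Given $\xi\in\Ho^2(G,D)$ with $\res^G_H(\xi)=\res^G_K(\xi)=0$, we have $\theta'(\xi)=(0,0,\nu(\xi))$; by the exact sequence $0\to\Hom(H/A\otimes K/A,D)\xrightarrow{\lambda^*}\Hom(H\otimes K,D)\xrightarrow{\mu^*}\Hom(H\otimes A,D)\oplus\Hom(K\otimes A,D)$ it then suffices to show $\nu(\xi)\in\Ker(\mu^*)=\im(\lambda^*)$. For this I would invoke the commutativity of the bottom of Diagram 1 in the form $(\alpha^*,\alpha^*)\circ\psi=\big((\psi,\psi),\mu^*\big)\circ\theta'$. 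Applying both sides to $\xi$: since $\res^G_H(\xi)=\res^G_K(\xi)=0$, the first $X_3$-coordinate of the right-hand side is $0$, so $\alpha^*(\psi(\xi))=0$ and hence $\psi(\xi)=0$ because $\alpha^*$ is a monomorphism; the second $X_3$-coordinate then gives $\mu^*(\nu(\xi))=\alpha^*(\psi(\xi))=0$. Consequently $\nu(\xi)=\lambda^*(\phi)$ for some $\phi\in\Hom(H/A\otimes K/A,D)$, and by the computation above the element $\xi-\inf(\theta^{-1}(0,0,\phi))$ has $\theta'$-image $(0,0,\nu(\xi)-\lambda^*(\phi))=0$, so it lies in $\Ker(\theta')=\im(\chi)$, say equal to $\chi(f)$. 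Therefore $\xi=\chi(f)+\inf(\theta^{-1}(0,0,\phi))$ lies in the image, as required.

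I expect the bottom-row diagram chase of the previous paragraph to be the only delicate point, mainly because it requires careful bookkeeping of the reindexing isomorphism $X_1\oplus X_2\oplus X_3\cong Y\oplus X_3\oplus X_3$ in order to isolate the relation $\mu^*(\nu(\xi))=\alpha^*(\psi(\xi))$ and then exploit $\psi(\xi)=0$; everything else is formal once Theorem~\ref{thmnew} and Diagram 1 are in place. Finally, the ``in particular'' statement is immediate from exactness at the left term, since that is precisely the injectivity of $(\chi,\inf\circ\theta^{-1})$ established at the outset.
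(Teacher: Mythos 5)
Your proof is correct and takes essentially the same route as the paper: the paper's own one-sentence proof invokes precisely the ingredients you use, namely the argument of Theorem \ref{thmnew} (injectivity of $\chi$ and $\im(\chi)=\Ker(\theta')$), the observation $\inf\big(\theta^{-1}(\Hom(H/A \otimes K/A,D))\big) \cap \Ker(\theta') = \{0\}$, and the commutativity of Diagram 1. Your write-up simply makes explicit the diagram chases the paper leaves implicit, in particular the verification that $\Ker(\res,\res)$ lands in the image via $\mu^*(\nu(\xi))=\alpha^*(\psi(\xi))=0$ and the exactness $\Ker(\mu^*)=\im(\lambda^*)$.
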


As we know by Theorem A that $\Hom(Z,D)$ embeds in $\Ho^2(G/Z,D)$. We now prove a much stronger result in the following
\begin{thm}\label{thmembd}
$\Hom(Z,D)$ embeds in $\Ho^2(H/A, D)/L \oplus \Ho^2(K/A, D)/M$, where $L \cong \Hom \big((A\cap H')/Z, D \big)$ and $M \cong \Hom \big( (A\cap K')/Z, D \big)$.
\end{thm}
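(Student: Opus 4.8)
The plan is to realize $\Hom(Z,D)$ as a subquotient of $\Ho^2(H/A,D)$ through the transgression map, so that the second direct summand plays no essential role and the whole problem reduces to the $H$-side. First I would record the two structural facts that make everything run: since $Z = H' \cap K' \le H \cap K = A$ we have $Z \le A \cap H'$ (and symmetrically $Z \le A \cap K'$), and since $A \le \Z(H)$ the group $A$, hence each of $A\cap H'$, $A\cap K'$, $Z$, is abelian. Applying $\Hom(-,D)$ to the short exact sequence $0 \to Z \to A\cap H' \to (A\cap H')/Z \to 0$ and using that $D$ is divisible, hence injective as a $\mathbb{Z}$-module, I obtain the short exact sequence
\[0 \to \Hom\big((A\cap H')/Z, D\big) \xrightarrow{\pi_H^*} \Hom(A\cap H', D) \xrightarrow{\res^{A\cap H'}_Z} \Hom(Z, D) \to 0,\]
where $\pi_H^*$ is induced by the projection $\pi_H : A\cap H' \to (A\cap H')/Z$ and the restriction $\res^{A\cap H'}_Z$ is surjective.

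Next I would invoke the central exact sequence of Section~2 with $X = H$ and $N = A$ (legitimate precisely because $A \le \Z(H)$), whose initial segment asserts that the transgression
\[\tra_H : \Hom(A\cap H', D) \into \Ho^2(H/A, D)\]
is injective. I then set $L := \tra_H\big(\pi_H^*\,\Hom((A\cap H')/Z, D)\big)$, a subgroup of $\Ho^2(H/A,D)$. Being the image of $\Hom((A\cap H')/Z,D)$ under the two injective maps $\pi_H^*$ and $\tra_H$, it satisfies $L \cong \Hom((A\cap H')/Z, D)$, exactly as the statement requires; I define $M$ symmetrically from the $K$-side.

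The embedding itself I would construct by sending $\phi \in \Hom(Z,D)$ to $\tra_H(\phi_1) + L \in \Ho^2(H/A,D)/L$, where $\phi_1 \in \Hom(A\cap H', D)$ is any preimage of $\phi$ under $\res^{A\cap H'}_Z$ (one exists by the surjectivity noted above). Two such preimages differ by an element of $\Ker(\res^{A\cap H'}_Z) = \im(\pi_H^*)$, whose transgression lies in $L$, so the assignment is well defined and is visibly a homomorphism. For injectivity: if $\tra_H(\phi_1) \in L = \tra_H(\im \pi_H^*)$, then injectivity of $\tra_H$ forces $\phi_1 \in \im(\pi_H^*) = \Ker(\res^{A\cap H'}_Z)$, whence $\phi = \res^{A\cap H'}_Z(\phi_1) = 0$. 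Composing with the inclusion of the first summand then embeds $\Hom(Z,D)$ into $\Ho^2(H/A,D)/L \oplus \Ho^2(K/A,D)/M$.

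The individual steps are routine once aligned; the only point demanding care, and the step I expect to be the crux, is the well-definedness of the quotient map, which rests simultaneously on the injectivity of $\tra_H$ and on the divisibility of $D$ (used both to guarantee a preimage $\phi_1$ of each $\phi$ and to identify $\Ker(\res^{A\cap H'}_Z)$ with $\im(\pi_H^*)$). I would also remark that, for the later use in Theorem~B, one can replace this by the \emph{diagonal} embedding $\phi \mapsto \big(\tra_H(\phi_1)+L,\ \tra_K(\phi_2)+M\big)$, with $\phi_1,\phi_2$ preimages of $\phi$ over $A\cap H'$ and $A\cap K'$ respectively; the identical two observations show it is well defined and injective, and its image is the subgroup $N \cong \Hom(Z,D)$ by which one quotients there.
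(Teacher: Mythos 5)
Your proposal is correct, but it takes a genuinely different route from the paper's proof. The paper argues globally through $G$: it sets $Y_1=\im\big(\inf:\Ho^2(G/A,D)\to\Ho^2(G/Z,D)\big)$, identifies $Y_1$ with $\Ho^2(H/A,D)/L\oplus\Ho^2(K/A,D)/M\oplus\Hom(H/A\otimes K/A,D)$ using that $G/Z$ is a central product of $H/Z$ and $K/Z$ with $(H/Z)'\cap(K/Z)'=1$, maps $\beta\in\Hom(Z,D)$ into $Y_1$ by lifting it to $\bar{\beta}\in\Hom(A\cap G',D)$ and transgressing, and then kills the third component by an explicit $2$-cocycle computation showing $\nu_1(\tra(\bar{\beta}))=0$. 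You instead work entirely inside the single factor $H$: injectivity of the transgression $\Hom(A\cap H',D)\into\Ho^2(H/A,D)$ (the initial segment of the Section 2 exact sequence applied to $X=H$, $N=A$, legitimate since $A\le \Z(H)$), together with divisibility of $D$, yields an embedding of $\Hom(Z,D)$ into the single summand $\Ho^2(H/A,D)/L$. This is shorter, bypasses Diagram 1 and the cocycle computation entirely, and proves a formally stronger statement, since the $K$-summand is not needed; moreover your $L=\tra_H\big(\pi_H^*\Hom((A\cap H')/Z,D)\big)$ is not merely of the right isomorphism type but coincides with the paper's implicit choice, namely $\Ker\big(\inf:\Ho^2(H/A,D)\to\Ho^2(H/Z,D)\big)$, by naturality of transgression along $Z\le A\le H$. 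What the paper's heavier route buys is exactly what the proof of Theorem B later consumes: it shows that the \emph{canonical} copy of $\Hom(Z,D)$, the image of the transgression $\Hom(Z,D)\to\Ho^2(G/Z,D)$ (equivalently $\Ker\big(\inf:\Ho^2(G/Z,D)\to\Ho^2(G,D)\big)$, the $N$ of Theorem B), avoids the $\Hom(H/A\otimes K/A,D)$ summand of $Y_1$; your one-sided embedding by itself says nothing about that particular subgroup. Your closing remark repairs this gap correctly: any pair of lifts $(\phi_1,\phi_2)$ of $\phi$ glues to a single $\bar{\beta}$ on $A\cap G'=(A\cap H')(A\cap K')$ because the two restrictions agree on $Z$, so your diagonal map is independent of all choices and coincides with the paper's transgression embedding, whence its image is precisely $N$.
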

\begin{proof}
Let $\alpha : \Hom(A \cap G', D) \to \Hom(Z, D)$ be the epimorphism induced by the inclusion $Z \hookrightarrow A \cap G'$. Set $Y_1 = \im\big(\inf : \Ho^2(G/A, D) \to \Ho^2(G/Z, D)\big)$.
Since $G/Z$ is a central product of $H/Z$ and $K/Z$ with $(H/Z)' \cap (K/Z)' = 1$,  it follows that
$Y_1$ is isomorphic to $\Ho^2(H/A, D)/L \oplus \Ho^2(K/A, D)/M \oplus \Hom(H/A \otimes K/A, D)$,
where $L \cong \Hom \big((A\cap H')/Z, D \big)$ and $M \cong \Hom \big( (A\cap K')/Z, D \big)$.

 Consider the following commutative  diagram (with rows not necessarily exact):
\[
 \xymatrix{
0 \;\;\ar[r] & \;\;\;\Hom(A \cap G', D)   \ar[r]^{\tra} \ar[d]^{\alpha}  \;\;\;& \;\;\Ho^2(G/A, D)\;\;\; \ar[d]^{\inf}\ar[r]^{\;\;\;\;\;\;\;\theta} &\;\;\;\;\; X\;\;\;\;\; \ar[d]^{(p_1, p_2, 1)}\\
0 \;\;\ar[r]  &\;\;\; \Hom(Z, D)  \ar[r]_{\tra}\;\;\; & \;\;Y_1 \;\;\;\ar[r]_{\;\;\;\;\;\;\;\;\bar{\theta} }  &\;\; Y, }\\
 \]
where 
$$X =\Ho^2(H/A, D) \oplus \Ho^2(K/A, D) \oplus \Hom(H/A \otimes K/A, D),$$
$$Y = \Ho^2(H/A, D)/L \oplus \Ho^2(K/A, D)/M \oplus \Hom(H/A \otimes K/A, D),$$
$\bar{\theta}$ is an isomorphism
and  $p_i$, $i =1, 2$, are  natural projections.

 Let $\beta \in \Hom(Z, D)$. Then there exists $\bar{\beta} \in \Hom(A \cap G', D)$ such that $\beta = \alpha(\bar{\beta})$. Let  $\tra(\bar{\beta}) = \xi \in \Ho^2(G/A, D)$. The element  $\xi$ is represented by a $2$-cocycle $f$  given by
\[f(\bar{x}, \bar{y}) = \bar{\beta}(\mu(\bar{x}) \mu(\bar{y}) \mu(\bar{x}\bar{y})^{-1}),  \;\; \bar{x} = xA \in G/A   \;\;  \text{and}   \;\; \bar{y} = yA \in G/A, \]
where $\mu$ represents the section $\mu: G/A \to G$  in  the exact sequence $1 \to A \to G \to G/A \to 1$.

Recall that $\theta = (\res, \res, \nu_1)$, where $\nu_1(\xi) : H/A \otimes K/A \to D$ for $\bar{h} = hA \in H/A$ and $\bar{k} = kA \in K/A$ is given by
\[\nu_1(\xi)(\bar{h}(H/A)', \bar{k}(K/A)') = f(\bar{h}, \bar{k}) - f(\bar{k}, \bar{h}).\]
Plugging in the value of  $f$ we have
\begin{eqnarray*}
\nu_1(\xi)(\bar{h}(H/A)', \bar{k}(K/A)') &=& f(\bar{h}, \bar{k}) - f(\bar{k}, \bar{h})\\
&=& \bar{\beta}\big(\mu(\bar{h}) \mu(\bar{k})  \mu(\bar{h}\bar{k})^{-1}\big)  - \bar{\beta}\big(\mu(\bar{k}) \mu(\bar{h}) \mu(\bar{k}\bar{h})^{-1}\big)\\
&=& \bar{\beta}\big(\mu(\bar{h}) \mu(\bar{k}) \mu(\bar{h} \bar{k})^{-1} \mu(\bar{k}\bar{h})  \mu(\bar{h})^{-1}\mu(\bar{k})^{-1} \big)\\
&=&  \bar{\beta}\big(\mu(\bar{h}) \mu(\bar{k}) \mu(\bar{h} \bar{k})^{-1} \mu(\bar{h}\bar{k})  \mu(\bar{k})^{-1}\mu(\bar{h})^{-1} \big)\\
&=& 0.
\end{eqnarray*}
Hence $\theta(\tra(\bar{\beta})) \in  \Ho^2(H/A, D)  \oplus \Ho^2(K/A, D)$. That  $\bar{\theta}(\tra(\beta)) \in  \Ho^2(H/A, D)/L \oplus \Ho^2(K/A, D)/M$ now follows by the commutativity of the above diagram, which completes  the proof. \hfill $\Box$

\end{proof}

Using an argument similar to one as in the preceding proof, we can also prove

\begin{thm}\label{thmre}
$\Hom(Z,D)$ embeds in $\Ho^2(H/Z, D) \oplus \Ho^2(K/Z, D)$.
\end{thm}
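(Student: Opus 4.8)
The plan is to realize $\Hom(Z,D)$ inside $\Ho^2(G/Z,D)$ by transgression and then to push it into $\Ho^2(H/Z,D) \oplus \Ho^2(K/Z,D)$ through the homomorphism $\theta'$ attached to the central product $G/Z = (H/Z)(K/Z)$, exactly paralleling the proof of Theorem \ref{thmembd}. First I would record the two structural facts that make the argument run. Since $Z = H' \cap K' \subseteq G'$, we have $Z \cap G' = Z$, so the exact sequence of Section 2 (applied to $X = G$, $N = Z$) begins with $\Hom(Z,D)$ and the transgression $\tra : \Hom(Z,D) \to \Ho^2(G/Z,D)$ is a monomorphism (indeed $\im(\tra) = \Ker(\inf)$, as in the proof of Theorem A). Second, $G/Z$ is a central product of $H/Z$ and $K/Z$ amalgamating $A/Z$, and since $(H/Z)' = H'/Z$ and $(K/Z)' = K'/Z$ we get $(H/Z)' \cap (K/Z)' = (H' \cap K')/Z = 1$. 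Hence Corollary \ref{corz1}, applied to $G/Z$, shows that
$$\theta' : \Ho^2(G/Z, D) \to \Ho^2(H/Z, D) \oplus \Ho^2(K/Z, D) \oplus \Hom(H/Z \otimes K/Z, D)$$
is a monomorphism. Consequently $\theta' \circ \tra$ is injective, and it only remains to show that its image lands in the first two summands.

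The crux is therefore the computation that the third component (the $\nu$-component, in the notation of \eqref{eqn1}) of $\theta'(\tra(\beta))$ vanishes for every $\beta \in \Hom(Z,D)$; this is the analogue of the displayed calculation in the proof of Theorem \ref{thmembd}. I would choose a set-theoretic section $\mu : G/Z \to G$ of $1 \to Z \to G \to G/Z \to 1$ compatibly with the two subgroups, i.e.\ with $\mu(H/Z) \subseteq H$ and $\mu(K/Z) \subseteq K$; this is possible because $Z \le A = H \cap K$, so the two prescriptions agree on $A/Z = (H/Z) \cap (K/Z)$. Then $\tra(\beta)$ is represented by the factor set $f(\bar{x}, \bar{y}) = \beta(\mu(\bar{x})\mu(\bar{y})\mu(\bar{x}\bar{y})^{-1})$, and for $\bar{h} \in H/Z$ and $\bar{k} \in K/Z$ one computes
$$f(\bar{h}, \bar{k}) - f(\bar{k}, \bar{h}) = \beta\big(\mu(\bar{h})\mu(\bar{k})\mu(\bar{h}\bar{k})^{-1}\mu(\bar{k}\bar{h})\mu(\bar{h})^{-1}\mu(\bar{k})^{-1}\big).$$
Since $[H,K]=1$ forces $\bar{h}\bar{k} = \bar{k}\bar{h}$ in $G/Z$ (so $\mu(\bar{h}\bar{k}) = \mu(\bar{k}\bar{h})$) and also $[\mu(\bar{h}), \mu(\bar{k})] = 1$ in $G$, the argument of $\beta$ collapses to the commutator $[\mu(\bar{h}), \mu(\bar{k})] = 1$, whence this difference is $0$. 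Thus the $\nu$-component of $\theta'(\tra(\beta))$ is identically zero.

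Combining the two halves, the restriction $(\res^{G/Z}_{H/Z}, \res^{G/Z}_{K/Z}) \circ \tra$ is an injection of $\Hom(Z,D)$ into $\Ho^2(H/Z, D) \oplus \Ho^2(K/Z, D)$, which is the asserted embedding. The only genuinely substantive point, and the one I would take care over, is the compatibility of the section $\mu$ with $H$ and $K$, since it is precisely this choice that turns the factor-set difference into the commutator $[\mu(\bar{h}), \mu(\bar{k})]$; everything else is formal once Corollary \ref{corz1} and the injectivity of $\tra$ are in place.
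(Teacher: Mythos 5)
Your proof is correct and is essentially the argument the paper intends (its own ``proof'' is just the remark that one argues as in Theorem \ref{thmembd}): transgression $\tra:\Hom(Z,D)\to \Ho^2(G/Z,D)$ is injective because $Z\le G'$, the map $\theta'$ for the central product $G/Z=(H/Z)(K/Z)$ is injective by Corollary \ref{corz1} since $(H/Z)'\cap (K/Z)'=1$, and the factor-set computation kills the $\Hom(H/Z\otimes K/Z,D)$-component. The one point you single out as substantive is in fact automatic: since $Z\le H'\le H$ and $Z\le K'\le K$, the full preimages of $H/Z$ and $K/Z$ in $G$ are exactly $H$ and $K$, so \emph{any} set-theoretic section $\mu:G/Z\to G$ satisfies $\mu(H/Z)\subseteq H$ and $\mu(K/Z)\subseteq K$, and no compatible choice needs to be made.
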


The following is now an immediate consequence of Theorem A and the preceding theorem.
\begin{cor}
If $A = Z$, then 
$$\Ho^2(G,D) \cong \big(\Ho^2(H/Z,D) \oplus \Ho^2(K/Z,D) \big)/\Hom(Z,D) \oplus \Hom(H/Z \otimes K/Z, D).$$
\end{cor}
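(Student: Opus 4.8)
The plan is to combine Theorem A (applied with $B = Z$) with the direct-product decomposition furnished by the isomorphism $\theta$ of Section 2, and then to locate the transgression image inside that decomposition via Theorem \ref{thmre}.

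First I would invoke Theorem A with $B = Z$, which is legitimate since $A = Z \le Z$ and $Z$ is central in $G$. By Theorem \ref{thm1} the inflation $\inf : \Ho^2(G/Z, D) \to \Ho^2(G, D)$ is surjective, so the exactness of
\[0 \to \Hom(Z,D) \xrightarrow{\tra} \Ho^2(G/Z, D) \xrightarrow{\inf} \Ho^2(G,D)\]
for the central extension $1 \to Z \to G \to G/Z \to 1$ yields
\[\Ho^2(G, D) \cong \Ho^2(G/Z, D)/N, \qquad N := \im\big(\tra : \Hom(Z,D) \to \Ho^2(G/Z,D)\big) \cong \Hom(Z, D).\]
Next, since $A = Z$ we have $G/Z = G/A$, $H/Z = H/A$ and $K/Z = K/A$, so the amalgamating subgroup of the quotient is trivial and $G/Z$ is an honest direct product of $H/Z$ and $K/Z$. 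Consequently the map $\theta$ of Section 2 is an isomorphism
\[\theta : \Ho^2(G/Z, D) \xrightarrow{\;\cong\;} \Ho^2(H/Z, D) \oplus \Ho^2(K/Z, D) \oplus \Hom(H/Z \otimes K/Z, D).\]

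The crux is to show that $\theta(N)$ is contained in the first two summands, that is, that the $\Hom(H/Z \otimes K/Z, D)$-component of $\theta \circ \tra$ vanishes. This is exactly the content of Theorem \ref{thmre}: the computation there, which parallels the vanishing $\nu_1(\tra(\bar{\beta})) = 0$ carried out in the proof of Theorem \ref{thmembd}, shows that transgressed classes have trivial $\nu_1$-image, so $\theta \circ \tra$ embeds $\Hom(Z, D)$ into $\Ho^2(H/Z, D) \oplus \Ho^2(K/Z, D)$. I expect this verification — that the amalgamation forces the tensor component of the transgression to die — to be the only genuine obstacle; once it is granted, it is precisely the step separating this corollary from a bare application of Theorem A. Since Theorem \ref{thmre} is already available, the obstacle is discharged by citation, and what remains is bookkeeping.

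Finally, because $\theta(N)$ lies entirely inside $\Ho^2(H/Z, D) \oplus \Ho^2(K/Z, D)$, quotienting the direct-sum decomposition by $N$ leaves the summand $\Hom(H/Z \otimes K/Z, D)$ untouched. Using the elementary fact that $(P \oplus Q)/P_0 \cong (P/P_0) \oplus Q$ for a submodule $P_0 \le P$, I would conclude
\[\Ho^2(G, D) \cong \big(\Ho^2(H/Z, D) \oplus \Ho^2(K/Z, D)\big)/\theta(N) \;\oplus\; \Hom(H/Z \otimes K/Z, D),\]
which is the asserted isomorphism since $\theta(N) \cong N \cong \Hom(Z, D)$.
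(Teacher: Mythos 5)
Your proposal is correct and follows essentially the same route as the paper: the paper derives this corollary as an immediate consequence of Theorem A (applied with $B = Z = A$, so that $G/Z$ is a direct product and $\theta$ is an isomorphism) together with Theorem \ref{thmre}, whose proof supplies exactly the vanishing of the tensor component of transgressed classes that you identify as the crux. Your explicit unpacking of why $\theta(N)$ lies in the first two summands, and the final splitting $(P \oplus Q)/P_0 \cong (P/P_0) \oplus Q$, is just the bookkeeping the paper leaves implicit.
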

\vspace{.2in}

We are now ready to prove Theorem B.

\vspace{.2in}
\noindent {\it Proof of Theorem B.}
As already observed in the proof of Theorem \ref{thmembd},  $\im(\inf : \Ho^2(G/A, D) \to \Ho^2(G/Z, D))$ is isomorphic to 
 $\Ho^2(H/A, D)/L \oplus \Ho^2(K/A, D)/M \oplus \Hom(H/A \otimes K/A, D)$. The first assertion now follows from  Theorem A and Theorem \ref{thmembd}.

The second assertion follows from Theorem A, Corollary \ref{corz1} (with $G$ replaced by $G/Z$) and Theorem \ref{thmre}.$\hfill\square$\\

It is perhaps an appropriate place to remark  that
$$\Ho^2(G,D)\cong (\Ho^2(H/A, D)/L \oplus \Ho^2(K/A, D)/M )/N  \oplus \Hom(H/A \otimes K/A, D)$$ if and only if $\inf: \Ho^2(G/A,D) \to \Ho^2(G/Z,D)$ is an epimorphism, where $L, M$ and $N$ are as defined above.

\vspace{.1in}

The following result is immediate from the commutativity of the bottommost part of Diagram 1.
\begin{prop}\label{lastlemma}
Let $\xi \in \Ho^2(\bar{G},D)$ such that  $\theta'(\xi)=(\xi_1,\xi_2,t)$, where $\bar{G} = G/Z$. Further, let either  $\res^{\bar{H}}_{\bar{A}}(\xi_1) = 0$ or  $\res^{\bar{K}}_{\bar{A}}(\xi_2) =0$. Then the following statements are equivalent:

(i)~$\xi \in \im\big(\inf:\Ho^2(\bar{G}/\bar{A},D) \to \Ho^2(\bar{G},D)\big)$;

(ii)~$\mu^*(t)=0$;

(iii)~$\psi(\xi_1)=\psi(\xi_2)=0$.
\end{prop}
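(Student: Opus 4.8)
The plan is to prove the chain of equivalences (i)$\Rightarrow$(iii)$\Rightarrow$(ii)$\Rightarrow$(i) by exploiting the commutativity of the bottommost part of Diagram 1, applied to the central product $\bar{G} = G/Z$ with amalgamated subgroup $\bar{A} = A/Z$. Since $\bar{G}/\bar{A} \cong G/A$, the relevant inflation map is $\inf : \Ho^2(G/A, D) \to \Ho^2(\bar{G}, D)$, and the central exact sequence $1 \to \bar{A} \to \bar{G} \to \bar{G}/\bar{A} \to 1$ supplies the Iwahori--Matsumoto sequence whose connecting map is $(\res^{\bar{G}}_{\bar{A}}, \psi)$. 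The key structural fact is that for the central product $\bar{G}$, where $(H/Z)' \cap (K/Z)' = 1$, the vertical composite in the left column of Diagram 1 fits into the exact sequence from Section 2, so that $\xi \in \im(\inf)$ precisely when $(\res^{\bar{G}}_{\bar{A}}, \psi)(\xi) = 0$.

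First I would establish (i)$\Rightarrow$(iii). If $\xi = \inf(\eta)$ for some $\eta \in \Ho^2(\bar{G}/\bar{A}, D)$, then by exactness of the left column, $(\res^{\bar{G}}_{\bar{A}}, \psi)(\xi) = 0$; in particular $\psi(\xi) = 0$. By the commutativity of the middle rectangle of Diagram 1, $\theta'(\xi) = (\xi_1, \xi_2, t)$ factors through $(\inf, \inf, \lambda^*)$, and the commutativity of the bottommost part then forces the component maps $\psi(\xi_1)$ and $\psi(\xi_2)$ (the $\bar{H}$- and $\bar{K}$-components of $(\res,\psi)\circ\theta'$) to vanish, giving $\psi(\xi_1) = \psi(\xi_2) = 0$.

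Next I would handle (iii)$\Rightarrow$(ii). The map $\mu^*$ is, by its construction in Section 2 via $\mu = \mu_1 \circ (f, g\circ\eta)$, expressible through the restriction-type data encoded in $\psi(\xi_1)$ and $\psi(\xi_2)$ together with the restrictions $\res^{\bar{H}}_{\bar{A}}(\xi_1)$, $\res^{\bar{K}}_{\bar{A}}(\xi_2)$. This is exactly where the hypothesis that one of $\res^{\bar{H}}_{\bar{A}}(\xi_1)$, $\res^{\bar{K}}_{\bar{A}}(\xi_2)$ already vanishes is used: it removes the contribution that would otherwise obstruct concluding $\mu^*(t) = 0$ from $\psi(\xi_1) = \psi(\xi_2) = 0$ alone. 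Reading off the bottom-right square of Diagram 1, the image of $(\xi_1,\xi_2,t)$ under $\big((\res,\psi),(\res,\psi),\mu^*\big)$ lands in $X_1 \oplus X_2 \oplus X_3$, and the identification with $Y \oplus X_3 \oplus X_3$ shows that the $X_3$-component coming from $\mu^*(t)$ must match the paired $\psi$-data; with one restriction zero and both $\psi$-terms zero, $\mu^*(t) = 0$ follows.

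Finally, (ii)$\Rightarrow$(i) is the converse direction and the main obstacle. Here I would argue that $\mu^*(t) = 0$ places $t \in \ker(\mu^*) = \im(\lambda^*)$ by exactness of the $\Hom$-sequence displayed in Section 2, so $t$ lifts to $\Hom(H/A \otimes K/A, D)$; combined with the vanishing restriction hypothesis controlling $\xi_1, \xi_2$, this lets me show that $(\xi_1, \xi_2, t)$ lies in the image of $(\inf, \inf, \lambda^*) \circ \theta$, hence $\xi \in \im(\inf)$ by a diagram chase up the right column and across the middle rectangle. The delicate point is ensuring the lifts of $\xi_1$ and $\xi_2$ along the inflations $\Ho^2(\bar{H}/\bar{A},D) \to \Ho^2(\bar{H},D)$ can be chosen compatibly with the lift of $t$, which is precisely what the Iwahori--Matsumoto exactness for $\bar{H}$ and $\bar{K}$ guarantees once the restriction obstruction is killed. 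I expect the careful bookkeeping of these compatible lifts, rather than any single computation, to be the crux of the argument.
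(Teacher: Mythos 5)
Your overall plan --- read off the bottommost square of Diagram 1 for $\bar{G} = G/Z$ and use the left column's exactness to characterize $\im(\inf)$ --- is exactly the paper's approach (the paper's entire proof is the one sentence that the result is immediate from that commutativity). But your accounting of what the commutativity says contains one harmless error and one genuine gap. Written out, commutativity of $(\bigtriangleup,\alpha^*,\alpha^*)\circ(\res,\psi) = \big((\res,\psi),(\res,\psi),\mu^*\big)\circ\theta'$ at $\xi$ gives, after the rearrangement $X_1\oplus X_2\oplus X_3\cong Y\oplus X_3\oplus X_3$,
\[
\big(\res^{\bar{H}}_{\bar{A}}\xi_1,\ \res^{\bar{K}}_{\bar{A}}\xi_2\big)=\big(\res^{\bar{G}}_{\bar{A}}\xi,\ \res^{\bar{G}}_{\bar{A}}\xi\big),
\qquad
\big(\psi(\xi_1),\psi(\xi_2)\big)=\alpha^*(\psi(\xi))=\mu^*(t).
\]
The second identity makes (ii)$\Leftrightarrow$(iii) unconditional, so your claim that the either/or restriction hypothesis is ``exactly where'' (iii)$\Rightarrow$(ii) uses it is a misattribution (harmless, since carrying an unused hypothesis does not invalidate the implication). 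The first, diagonal, identity is the one you never state, and it is the actual point of the hypothesis: it upgrades ``one of the two restrictions vanishes'' to ``$\res^{\bar{G}}_{\bar{A}}\xi=0$, hence both vanish''. Without it your (ii)$\Rightarrow$(i) step fails: lifting $\xi_1$ and $\xi_2$ up the right column via Iwahori--Matsumoto requires $(\res^{\bar{H}}_{\bar{A}},\psi)(\xi_1)=0$ \emph{and} $(\res^{\bar{K}}_{\bar{A}},\psi)(\xi_2)=0$, while your hypothesis kills only one restriction, and nothing in your sketch supplies the other.

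Once the diagonal identity is in hand, your right-column lifting --- and the ``compatibility of lifts'' you flag as the crux, which is a non-issue because $\theta$ is an isomorphism --- is unnecessary. From (ii) or (iii), $\alpha^*(\psi(\xi))=0$, so $\psi(\xi)=0$ because $\alpha^*$ is a monomorphism; the hypothesis plus the diagonal identity gives $\res^{\bar{G}}_{\bar{A}}\xi=0$; then exactness of the left column at $\Ho^2(\bar{G},D)$ yields $\xi\in\im(\inf)$ directly. Conversely, (i) gives $(\res,\psi)(\xi)=0$ and hence (iii) by the displayed identities. If you do keep your right-column route for (ii)$\Rightarrow$(i), note one further omission: from $\theta'(\xi)=(\inf,\inf,\lambda^*)\theta(\eta)=\theta'(\inf(\eta))$ you may conclude $\xi=\inf(\eta)$ only because $\theta'$ is injective on $\Ho^2(\bar{G},D)$ (Corollary \ref{corz1}, applicable since $\bar{H}'\cap\bar{K}'=1$); your ``diagram chase up the right column'' uses this silently.
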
 

As a consequence we have

\begin{cor} 
If $\inf: \Ho^2(H/A,D) \to \Ho^2(H/Z,D)$ and $\inf: \Ho^2(K/A,D) \to \Ho^2(K/Z,D)$ are epimorphisms, then 
\[H^2(G,D) \cong \big(H^2(H/Z,D) \oplus H^2(K/Z,D) \big)/ \Hom(Z,D) \oplus \Hom(H/A \otimes K/A, D).\]
More precisely, the first embedding in Theorem B is an isomorphism.
\end{cor}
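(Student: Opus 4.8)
The plan is to invoke Proposition \ref{lastlemma} as the central tool, together with Theorem B(i), and show that the two hypotheses on the inflation maps force the relevant obstruction to vanish, thereby upgrading the embedding of Theorem B(i) into an isomorphism. Concretely, Theorem B(i) already gives that
$$\big(\Ho^2(H/A, D)/L \oplus \Ho^2(K/A,D)/M\big)/N \oplus \Hom(H/A \otimes K/A, D)$$
embeds in $\Ho^2(G,D)$, and by the remark immediately following the proof of Theorem B this embedding is an isomorphism precisely when $\inf : \Ho^2(G/A,D) \to \Ho^2(G/Z,D)$ is an epimorphism. So the whole task reduces to proving that this particular inflation map is surjective under the stated hypotheses.

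First I would set $\bar G = G/Z$, $\bar H = H/Z$, $\bar K = K/Z$, $\bar A = A/Z$, so that $\bar G$ is a central product of $\bar H$ and $\bar K$ amalgamating $\bar A$ with $\bar H' \cap \bar K' = 1$ (this is exactly the normalization used in the proof of Theorem \ref{thmembd}). Surjectivity of $\inf : \Ho^2(\bar G/\bar A, D) \to \Ho^2(\bar G, D)$ means every $\xi \in \Ho^2(\bar G, D)$ lies in the image of inflation. Given such a $\xi$, write $\theta'(\xi) = (\xi_1, \xi_2, t)$ with $\xi_1 = \res^{\bar G}_{\bar H}(\xi)$ and $\xi_2 = \res^{\bar G}_{\bar K}(\xi)$. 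The strategy is to verify condition (iii) of Proposition \ref{lastlemma}, namely $\psi(\xi_1) = \psi(\xi_2) = 0$, after first arranging that one of the restriction hypotheses of that proposition holds.

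The key step is to exploit the two hypotheses. Since $\inf : \Ho^2(H/A, D) \to \Ho^2(H/Z, D)$ is surjective, Lemma \ref{lemma1} (applied to the central quotient $H/Z \to H/A$ with kernel $A/Z$) tells us, via the exact sequence recalled in Section 2, that the map $\inf : \Ho^2(\bar H/\bar A, D) \to \Ho^2(\bar H, D)$ is onto, which forces the composite $(\res^{\bar H}_{\bar A}, \psi) \circ \inf$ to kill everything coming from $\Ho^2(\bar H/\bar A, D)$; more usefully, surjectivity of this inflation means the tail map $\psi : \Ho^2(\bar H, D) \to \Hom(\bar H \otimes \bar A, D)$ vanishes on $\im(\inf) = \Ho^2(\bar H, D)$, i.e.\ $\psi(\xi_1) = 0$ automatically, and likewise the restriction component behaves controllably. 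The symmetric hypothesis gives $\psi(\xi_2) = 0$. With both $\psi$-obstructions vanishing, condition (iii) of Proposition \ref{lastlemma} is met; to apply the proposition I must also supply one of the two restriction conditions $\res^{\bar H}_{\bar A}(\xi_1) = 0$ or $\res^{\bar K}_{\bar A}(\xi_2) = 0$, which again follows from the surjectivity of the $\bar H$-inflation map (the image of inflation restricts trivially to $\bar A$, since $\bar A$ is central and the inflated class is a pullback from $\bar H/\bar A$). Hence $\xi \in \im(\inf : \Ho^2(\bar G/\bar A, D) \to \Ho^2(\bar G, D))$, proving surjectivity for all $\xi$.

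The main obstacle I anticipate is bookkeeping the two successive central quotients cleanly: the hypotheses are stated for the maps out of $\Ho^2(H/A,D)$ and $\Ho^2(K/A,D)$, whereas Proposition \ref{lastlemma} is phrased over $\bar G = G/Z$, so I must carefully translate surjectivity of $\inf : \Ho^2(H/A,D) \to \Ho^2(H/Z,D)$ into the vanishing of the appropriate $\psi$ and restriction components in the $\bar G$-setting without conflating the roles of $A$ and $Z$. Once that translation is pinned down and both $\psi(\xi_i) = 0$ are established, the conclusion drops out of Proposition \ref{lastlemma} together with the isomorphism criterion in the remark after Theorem B, and the explicit form of $\Ho^2(G,D)$ follows by substituting $N \cong \Hom(Z,D)$ and simplifying $(\,\cdot\,/L \oplus \,\cdot\,/M)/N$ against the already-computed image of inflation.
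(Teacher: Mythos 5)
Your proposal is correct and takes essentially the same approach as the paper: the paper states this corollary as an immediate consequence of Proposition \ref{lastlemma}, and your argument---surjectivity of the two inflations forces $(\res,\psi)$ to vanish identically on $\Ho^2(H/Z,D)$ and $\Ho^2(K/Z,D)$ by exactness of the sequence recalled in Section 2, so $\psi(\xi_1)=\psi(\xi_2)=0$ and $\res^{\bar{H}}_{\bar{A}}(\xi_1)=0$, whence Proposition \ref{lastlemma} gives $\xi\in\im\big(\inf:\Ho^2(G/A,D)\to\Ho^2(G/Z,D)\big)$ and the remark after Theorem B finishes the proof---is exactly that derivation. The one blemish is your invocation of Lemma \ref{lemma1}, which concerns free presentations and plays no role here: the identification of $\inf:\Ho^2(H/A,D)\to\Ho^2(H/Z,D)$ with $\inf:\Ho^2(\bar{H}/\bar{A},D)\to\Ho^2(\bar{H},D)$ is canonical, and the work is done by the Iwahori--Matsumoto exact sequence, which you do also cite.
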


Using Proposition \ref{lastlemma}, we also have
\begin{thm}
If the second embedding in Theorem B is an isomorphism, then so is the first.
\end{thm}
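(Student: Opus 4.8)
The plan is to reduce both embeddings to concrete surjectivity statements and then pass from one to the other by means of Proposition \ref{lastlemma}. Write $\bar G = G/Z$, $\bar H = H/Z$, $\bar K = K/Z$ and $\bar A = A/Z$, so that $\bar G$ is the central product of $\bar H$ and $\bar K$ amalgamating $\bar A$, with $\bar H' \cap \bar K' = 1$. I first record two translations. By the remark following the proof of Theorem B, the first embedding is an isomorphism exactly when $\inf : \Ho^2(G/A, D) \to \Ho^2(\bar G, D)$ is onto. For the second, I would observe that it is assembled from the isomorphism $\Ho^2(G, D) \cong \Ho^2(\bar G, D)/N$ of Theorem A together with the monomorphism $\theta'$ of Corollary \ref{corz1} applied to $\bar G$ (for which $\bar H' \cap \bar K' = 1$), where $\theta'(N)$ lies in $\Ho^2(\bar H, D) \oplus \Ho^2(\bar K, D)$ by Theorem \ref{thmre}. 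Since $\theta'(N) \subseteq \im \theta'$, the map induced by $\theta'$ on the quotients by $N$ and $\theta'(N)$ is onto iff $\theta'$ itself is. Hence the second embedding is an isomorphism precisely when $\theta' : \Ho^2(\bar G, D) \to \Ho^2(\bar H, D) \oplus \Ho^2(\bar K, D) \oplus \Hom(\bar H \otimes \bar K, D)$ is surjective, and the task becomes: surjectivity of $\theta'$ forces surjectivity of $\inf$.

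Next I would exploit surjectivity of $\theta'$ by feeding it the elements $(\xi_1, 0, 0)$ and $(0, \xi_2, 0)$. Choosing $\xi \in \Ho^2(\bar G, D)$ with $\theta'(\xi) = (\xi_1, 0, 0)$ and reading off the bottommost (commutative) part of Diagram 1 for $\bar G$ — using $\res^{\bar H}_{\bar A}\circ \res^{\bar G}_{\bar H} = \res^{\bar G}_{\bar A}$ with the diagonal $\bigtriangleup$, and the identity $(\alpha^*,\alpha^*)\circ \psi = \big((\psi,\psi),\mu^*\big)\circ \theta'$ with $\alpha^*$ a monomorphism — I expect to conclude $\res^{\bar H}_{\bar A}(\xi_1) = \res^{\bar K}_{\bar A}(0) = 0$ and $\psi(\xi_1) = 0$. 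As $\xi_1$ is arbitrary, this shows $\res^{\bar H}_{\bar A}$ and $\psi$ vanish identically on $\Ho^2(\bar H, D)$, and symmetrically on $\Ho^2(\bar K, D)$.

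Finally, for an arbitrary $\xi \in \Ho^2(\bar G, D)$ with $\theta'(\xi) = (\xi_1, \xi_2, t)$, the vanishing just obtained supplies both the hypothesis $\res^{\bar H}_{\bar A}(\xi_1) = 0$ of Proposition \ref{lastlemma} and its condition (iii), namely $\psi(\xi_1) = \psi(\xi_2) = 0$. Invoking that proposition yields $\xi \in \im\big(\inf : \Ho^2(G/A, D) \to \Ho^2(\bar G, D)\big)$, and since $\xi$ was arbitrary, $\inf$ is onto; by the first translation, the first embedding is an isomorphism.

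I expect the main obstacle to be the first step, namely pinning down that the second embedding being an isomorphism is equivalent to surjectivity of $\theta'$ on $\bar G$ (this requires unwinding how the embedding of Theorem B(ii) is built and verifying that $\theta'(N)$ lands in the first two summands), and then correctly matching the components of the bottom row of Diagram 1 so that the diagonal $\bigtriangleup$ and the injectivity of $\alpha^*$ can be used to extract the identical vanishing of $\res^{\bar H}_{\bar A}$, $\res^{\bar K}_{\bar A}$ and $\psi$. Once these identifications are secured, the diagram chase and the appeal to Proposition \ref{lastlemma} are routine.
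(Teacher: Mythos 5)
Your proof is correct and is essentially the paper's own argument: both reduce the hypothesis to surjectivity of the monomorphism $\theta'$ on $\Ho^2(G/Z,D)$, feed the tuples $(\xi_1,0,0)$ and $(0,\xi_2,0)$ into that surjection, and extract the required vanishing from the commutativity of the bottom of Diagram 1 together with Proposition \ref{lastlemma}. The only (harmless) difference is the finish: the paper also treats $(0,0,t)$ and deduces that $\lambda^*$ and the inflations $\Ho^2(H/A,D)\to\Ho^2(H/Z,D)$ and $\Ho^2(K/A,D)\to\Ho^2(K/Z,D)$ are epimorphisms, assembling the first embedding componentwise, whereas you conclude that $\inf\colon\Ho^2(G/A,D)\to\Ho^2(G/Z,D)$ is onto and then cite the remark following the proof of Theorem B.
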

\begin{proof}
Since the isomorphism
$$\Ho^2(G, D)  \cong \big(\Ho^2(H/Z, D) \oplus \Ho^2(K/Z,D)\big)/\Hom(Z,D) \oplus \Hom(H/Z \otimes K/Z, D)$$
is induced by the monomorphism $\theta'$ as defined in \eqref{eqn1} with  $G$ replaced by $G/Z$, it follows from the commutative diagram 
\[
 \xymatrix{
    0     \ar[d]&    &0 \ar[d]\\
   \Hom(Z, D)   \ar[d]_{\tra}  & \xrightarrow{(\id,\id)}  &\Hom(Z, D) \oplus \Hom(Z, D) \ar[d]^{(\tra, \tra, 0)}\\
    \Ho^2(G/Z,D)   &  \xrightarrow{\theta'} &\Ho^2(H/Z,D) \oplus \Ho^2(K/Z,D) \oplus \Hom(H/Z \otimes K/Z,D) .}
\]
that $\theta'$ is  an isomorphism.
Let  $t \in \Hom(H/Z \otimes K/Z, D)$. Then there exists $\xi \in \Ho^2(G/Z, D)$ such that  $\theta'(\xi)=(0,0,t)$. It then follows from Diagram 1 (for $G/Z$ in place of $G$) that  $(\res,\psi)(\xi)=0$. By  Proposition \ref{lastlemma} we then have $\mu^*(t)=0$, which shows that $\lambda^* :  \Hom(H/A \otimes K/A,D) \to \Hom(H/Z \otimes K/Z,D)$ is an epimorphism, and hence, an isomorphism. 
Similarly,  considering the elements $\xi_1 \in \Ho^2(H/Z, D)$ and  $\xi_2 \in \Ho^2(K/Z,D)$ in succession, the above argument also shows that $\inf :  \Ho^2(H/A,D) \to \Ho^2(H/Z,D)$ and $\inf : \Ho^2(K/A,D)\Ho^2(K/Z,D)$ are epimorphisms. Hence  $ \Ho^2(H/A, $ $D)/L \cong \Ho^2(H/Z, D)$ and $\Ho^2(K/A, D)/M \cong \Ho^2(K/Z,D)$. It now follows that the first embedding in Theorem B is an isomorphism. \hfill $\Box$

\end{proof}
\vspace{.2in}

We conclude this section with the following remark made by J.  Wiegold while reviewing \cite{EHS} for AMS (see MR0349854 (50 \#2347)). Let $G$ be the direct product $G = H \times K$ of its normal subgroups $H$ and $K$, and $U$ be an arbitrary central subgroup of $G$.  Then  $G/U$  can be viewed as a central product of $HU/U$ and $KU/U$. Thus all the above results make sense for  $\Ho^2(G/U, D)$.

\section{Examples}

In this section we present several examples (all for finite $p$-groups) to show that various situations of  Theorem B can indeed occur. By $\mathbb{Z}_p^{(n)}$, we denote  the elementary abelian $p$-group of rank $n$, where $n \ge 1$. We start with an example which shows that neither of the two embeddings of Theorem B is necessarily an isomorphism.

\vspace{.1in}

\noindent {\bf Example 1.}  Let $H$ be  the extraspecial $p$-groups of order $p^3$ and exponent $p$ and $K=\mathbb{Z}_p^{(n+1)}$, where $n \ge 1$. Let $G$ be a central product of $H$ and $K$ amalgamated at $A \cong H' \cong \mathbb{Z}_p$. Note that  $G=H \times \mathbb{Z}_p^{(n)}$. It is easy to see that 
$$\M(G) \cong \mathbb{Z}_p^{\big(\frac{1}{2}n(n+3) +2\big)}.$$
 Note that $Z = H' \cap K' =1$. Then 
\[\M(H/A)/ \Hom(A \cap H',\mathbb{C}^{*}) \oplus \M(K/A)/ \Hom(A \cap K',\mathbb{C}^{*}) \oplus \Hom(H/A \otimes K/A,\mathbb{C}^{*})\] is isomorphic to  $\mathbb{Z}_p^{\big(\frac{1}{2}n(n+3)\big)}$, which is   strictly contained in $M(G)$.  Since 
$$\M(H) \oplus \M(K) \oplus \Hom(H \otimes K, \mathbb{C}^{*}) \cong \mathbb{Z}_p^{(\frac{1}{2}(n+1)(n+4)+2)},$$ it properly contains  $M(G)$.

\vspace{.1in}

Before  proceeding further, we mention the following interesting result by M. R. Jones \cite[Theorem 4.1(i)]{MR}.

\begin{thm}\label{jonesthm}
Let X be a finite group and N a central subgroup. Then
\[|\M(X)|  |X' \cap N| \;\; \text{divides} \;\;  |\M(X/N)| |\M(N)|  |(X/N) \otimes N|.\]
\end{thm}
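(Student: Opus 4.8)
The plan is to prove the divisibility relation of Theorem \ref{jonesthm} by extracting numerical information from the fundamental exact sequence for a central extension, which has already been used repeatedly in this paper. I would apply that sequence to the central extension $1 \to N \to X \to X/N \to 1$, specializing the coefficient module to $D = \mathbb{C}^{*}$, so that $\Hom(-, \mathbb{C}^{*})$ becomes Pontryagin duality on finite abelian groups and hence preserves cardinality. Concretely, I would work with
\[0 \to \Hom(N \cap X', \mathbb{C}^{*}) \xrightarrow{\tra} \M(X/N) \xrightarrow{\inf} \M(X) \xrightarrow{\chi} \M(N) \oplus \Hom(X \otimes N, \mathbb{C}^{*}),\]
where $\chi = (\res, \psi)$ as recalled in Section 2.

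\emph{Key steps.} First I would read off from exactness at $\M(X/N)$ that $\Ker(\inf) = \im(\tra) \cong \Hom(N \cap X', \mathbb{C}^{*})$, so that $|\im(\inf)| = |\M(X/N)| / |N \cap X'|$, using that $|\Hom(N \cap X', \mathbb{C}^{*})| = |N \cap X'|$ for the finite group $N \cap X' = X' \cap N$. Second, exactness at $\M(X)$ gives $\Ker(\chi) = \im(\inf)$, whence $|\M(X)| = |\im(\inf)| \cdot |\im(\chi)|$. Third, since $\im(\chi)$ is a subgroup of $\M(N) \oplus \Hom(X \otimes N, \mathbb{C}^{*})$, its order \emph{divides} $|\M(N)| \cdot |\Hom(X \otimes N, \mathbb{C}^{*})| = |\M(N)| \cdot |(X/N) \otimes N|$, where the last equality again uses duality together with $X \otimes N = (X/X') \otimes N$ and the fact that $N$ is central. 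Combining these three observations yields
\[|\M(X)| = \frac{|\M(X/N)|}{|X' \cap N|} \cdot |\im(\chi)|,\]
and therefore $|\M(X)| \, |X' \cap N| = |\M(X/N)| \, |\im(\chi)|$, which divides $|\M(X/N)| \, |\M(N)| \, |(X/N) \otimes N|$ exactly as claimed.

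\emph{Main obstacle.} The genuinely delicate point is to ensure that all the groups appearing are finite, so that $\Hom(-, \mathbb{C}^{*})$ may legitimately be treated as an order-preserving duality and so that the orders multiply cleanly across the short exact pieces of the sequence. Since $X$ is finite, $X/N$, $N$, $X'\cap N$ and $(X/N)\otimes N$ are all finite, and $\M(X)$, $\M(X/N)$, $\M(N)$ are finite as Schur multipliers of finite groups; hence $|\Hom(B,\mathbb{C}^{*})| = |B|$ for each finite abelian $B$ occurring. The only subtlety is that the final factor is $|\im(\chi)|$ rather than the full target, which is precisely why the statement asserts divisibility and not equality; one must resist the temptation to claim surjectivity of $\chi$.

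I would note that this argument is purely a matter of counting along the exact sequence and does not require the central-product hypotheses of the main theorems; it applies to an arbitrary finite group $X$ with central subgroup $N$, which is exactly the generality in which Jones states the result.
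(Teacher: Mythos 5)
The paper does not prove this statement at all---it is quoted verbatim from Jones \cite{MR} as a tool for the examples in Section 4---so your proposal can only be judged on its own merits, and it contains a genuine gap at the very last step. Your counting along the Iwahori--Matsumoto sequence is correct up to the identity $|\M(X)|\,|X' \cap N| = |\M(X/N)|\,|\im(\chi)|$, and Lagrange does give that $|\im(\chi)|$ divides $|\M(N)|\,|\Hom(X \otimes N, \mathbb{C}^{*})| = |\M(N)|\,|X \otimes N|$. But your bridging claim $|X \otimes N| = |(X/N) \otimes N|$ is false. With the paper's convention, $X \otimes N = (X/X') \otimes N$, whereas $(X/N) \otimes N = (X/NX') \otimes N$; centrality of $N$ gives only that $N$ is abelian, not that these agree. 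The natural surjection $(X/X') \otimes N \to (X/NX') \otimes N$ has kernel the image of $N \otimes N$, which can be nontrivial: already for $X = N = \mathbb{Z}_p$ one has $|X \otimes N| = p$ while $|(X/N) \otimes N| = 1$. So as written your argument proves only the weaker divisibility $|\M(X)|\,|X' \cap N| \mid |\M(X/N)|\,|\M(N)|\,|X \otimes N|$, which is strictly weaker than Jones's statement whenever the image of $N \otimes N$ in $X \otimes N$ is nontrivial.

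The missing idea is that $\im(\chi)$ is not an arbitrary subgroup of $\Ho^2(N,\mathbb{C}^{*}) \oplus \Hom(X \otimes N, \mathbb{C}^{*})$: its two components are linked. If $\xi \in \Ho^2(X,\mathbb{C}^{*})$ is represented by a cocycle $f$, then for $n, m \in N$ one has $\psi(\xi)(\bar{n} \otimes m) = f(n,m) - f(m,n)$, which is precisely the antisymmetrization pairing attached to $\res^X_N(\xi) \in \Ho^2(N,\mathbb{C}^{*})$ (this pairing is well defined on cohomology classes because coboundaries on the abelian group $N$ have vanishing antisymmetrization). Hence $\im(\chi)$ lies in the subgroup $W$ of pairs $(\eta, t)$ such that the restriction of $t$ to the image of $N \otimes N$ in $X \otimes N$ equals the antisymmetrization of $\eta$. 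For a fixed $\eta$, the admissible $t$ form either the empty set or a coset of $\Hom\big((X \otimes N)/\im(N \otimes N), \mathbb{C}^{*}\big) \cong \Hom\big((X/N) \otimes N, \mathbb{C}^{*}\big)$, using right-exactness of the tensor product; consequently $|W|$ divides $|\M(N)|\,|(X/N) \otimes N|$. Replacing your Lagrange bound $\im(\chi) \le \M(N) \oplus \Hom(X \otimes N,\mathbb{C}^{*})$ by $\im(\chi) \le W$ and repeating your counting yields exactly the divisibility $|\M(X)|\,|X' \cap N| \mid |\M(X/N)|\,|\M(N)|\,|(X/N) \otimes N|$ that the theorem asserts.
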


The following two examples show that  the first  embedding in Theorem B can very well be an isomorphisms, but the second one can still be strict (i.e., not an isomorphism).

\vspace{.2in}

\noindent{\bf Example 2.} Consider the group $G$ presented as
$$G=\langle \alpha, \alpha_1, \alpha_2, \gamma \mid [\alpha_1,\alpha]=\gamma^{p^2}=\alpha_2, \alpha^{p}=\alpha_1^p=\alpha_2^p=1\rangle.$$
 Take $H=\langle \alpha, \alpha_1, \alpha_2 \mid [\alpha_1,\alpha]=\alpha_2, \alpha^{p}=\alpha_1^p=\alpha_2^p=1\rangle$
 and $K=\langle \gamma \rangle \cong \mathbb{Z}_{p^3}$. Then  $G$ is a central product of $H$ and $K$ amalgamated at $A \cong \gen{\alpha_2} \cong \gen{\gamma^{p^2}}$. Note that $Z = 1$ and 
 \[\M(H/A)/ \Hom(A \cap H',\mathbb{C}^{*}) \oplus \M(K/A)/ \Hom(A \cap K',\mathbb{C}^{*}) \oplus \Hom(H/A \otimes K/A,\mathbb{C}^{*})\]
 is isomorphic to $\mathbb{Z}_p^{(2)}$.   By Theorem \ref{jonesthm},  we have $|\M(G)| \leq p^2$.  Hence $\M(G) \cong  \mathbb{Z}_p^{(2)}$, and therefore  the first embedding in Theorem B is  an isomorphisms. It is easy to see that 
 \[\M(H) \oplus \M(K) \oplus \Hom(H \otimes K,\mathbb{C}^{*}) \cong  \mathbb{Z}_p^{(4)},\]
which shows that the second embedding is strict.
\vspace{.1in}

\noindent{\bf Example 3.} Consider the group $G$ presented as
$$G=\langle \alpha,\alpha_1,\alpha_2,\alpha_3,\gamma \mid [\alpha_1,\alpha]=\alpha_2,[\alpha_2,\alpha]=\gamma^p=\alpha_3,\alpha^p=\alpha_i^{(p)}=1, i=1,2,3 \rangle.$$ 
Take $H=\langle \alpha,\alpha_1,\alpha_2,\alpha_3 \mid [\alpha_1,\alpha]=\alpha_2,[\alpha_2,\alpha]=\alpha_3,\alpha^p=\alpha_i^{(p)}=1, i=1,2,3 \rangle$ and $K=\langle \gamma \rangle \cong \mathbb{Z}_{p^2}$.
Then $G$ is a central product of $H$ and $K$ amalgamated at $A \cong \gen{\alpha_3} \cong \gen{\gamma^{p}}$ and  $Z = 1$.
Note that 
\[M(H/A)/\Hom(A\cap H', \mathbb{C}^{*}) \oplus M(K/A)/ \Hom(A\cap K', \mathbb{C}^{*}) \oplus \Hom(H/A \otimes K/A, \mathbb{C}^{*})\]
is isomorphic to $\mathbb{Z}_p^{(3)}$,
which embeds in  $\M(G)$. Again by  Theorem \ref{jonesthm}, we have $|\M(G)| \leq p^3$. Hence the first embedding is an isomorphism. That the second one is not can be easily seen as in Example 2.

\vspace{.1in}

We finally present an example which shows that both the embeddings in Theorem B can be isomorphisms.

\vspace{.1in}

\noindent{\bf Example 4.}
Let $H$ be the extraspecial $p$-groups of order $p^3$ and exponent $p^2$ and $K \cong \mathbb{Z}_{p^{n+1}}$, the cyclic group of order $p^{n+1}$, where $n \ge 1$.
Let $G$ be a central product of $H$ and $K$ amalgamated at $A \cong H' \cong \mathbb{Z}_p$. Note that  $G=H \times \mathbb{Z}_{p^{n}}$. It is easy to see that 
$\M(G) \cong \mathbb{Z}_p^{(2)}.$
 Note that $Z = H' \cap K' =1$. Then 
\[\M(H/A)/ \Hom(A \cap H',\mathbb{C}^{*}) \oplus \M(K/A)/ \Hom(A \cap K',\mathbb{C}^{*}) \oplus \Hom(H/A \otimes K/A,\mathbb{C}^{*})\] is isomorpic to  $\mathbb{Z}_p^{(2)}$. Also
$$\M(H) \oplus \M(K) \oplus \Hom(H \otimes K, \mathbb{C}^{*}) \cong \mathbb{Z}_p^{(2)}.$$ 
Hence both the embeddings are isomorphisms.

\vspace{.1in}

\vspace{.1in}

\end{document}